\documentclass[]{amsart}

\usepackage[english]{babel}
\usepackage[utf8]{inputenc}
\usepackage[T1]{fontenc}
\usepackage{lmodern}
\usepackage{microtype}
\usepackage{amsmath, amsfonts, amssymb, amsthm}
\usepackage{mathtools}
\usepackage{color}
\usepackage[normalem]{ulem}
\usepackage{tikz}
\usepackage{float}
\usepackage[labelfont=sc]{caption}

\newcommand{\CC}{\mathbb{C}}

\newcommand{\GG}{\mathbb{G}}
\newcommand{\NN}{\mathbb{N}}

\newcommand{\QQ}{\mathbb{Q}}
\newcommand{\RR}{\mathbb{R}}
\newcommand{\ZZ}{\mathbb{Z}}

\newcommand{\D}{\mathcal{D}}

\newcommand{\F}{\mathcal{F}}
\newcommand{\Hc}{\mathcal{H}}
\newcommand{\I}{\mathcal{I}}
\newcommand{\J}{\mathcal{J}}

\newcommand{\Oc}{\mathcal{O}}
\newcommand{\Pc}{\mathcal{P}}

\newcommand{\Sc}{\mathcal{S}}
\newcommand{\T}{\mathcal{T}}
\newcommand{\X}{\mathcal{X}}

\newcommand{\set}[1]{\left\{ #1 \right\}}
\newcommand{\setb}[1]{\left( #1 \right)}
\newcommand{\abs}[1]{\left| #1 \right|}

\newtheorem{mymasterthm}{notForUse}
\theoremstyle{definition}

\newtheorem{remark}[mymasterthm]{Remark}

\theoremstyle{plain}
\newtheorem{lemma}[mymasterthm]{Lemma}
\newtheorem{theorem}[mymasterthm]{Theorem}
\newtheorem{corollary}[mymasterthm]{Corollary}
\newtheorem{proposition}[mymasterthm]{Proposition}

\allowdisplaybreaks

\title[Decidability of matrix equations and related Diophantine problems]{Decidability of multiplicative matrix equations\\and related Diophantine problems}
\makeatletter
\@namedef{subjclassname@2020}{\textup{2020} Mathematics Subject Classification}
\makeatother
\subjclass[2020]{11C20, 11D61}
\keywords{Diophantine problems, matrix equations, decidability}

\author[S. Heintze]{Sebastian Heintze}
\address{Sebastian Heintze,
	Graz University of Technology,
	Institute of Analysis and Number Theory,
	Steyrergasse 30/II,
	A-8010 Graz, Austria}
\email{heintze@math.tugraz.at}

\author[A. Noubissie]{Armand Noubissie}
\address{Armand Noubissie,
	Graz University of Technology,
	Institute of Analysis and Number Theory,
	Kopernikusgasse 24/II,
	A-8010 Graz, Austria}
\email{armand.noubissie@tugraz.at}

\author[R.F. Tichy]{Robert F. Tichy}
\address{Robert F. Tichy,
	Graz University of Technology,
	Institute of Analysis and Number Theory,
	Kopernikusgasse 24/II,
	A-8010 Graz, Austria}
\email{tichy@tugraz.at}

\begin{document}
	
	\begin{abstract}
		Some new decidability results for multiplicative matrix equations over algebraic number fields are established.
		In particular, special instances of the so-called knapsack problem are considered.
		The proofs are based on effective methods for Diophantine problems in finitely generated domains as presented in the recent book of Evertse and Gy\H{o}ry.
		The focus lies on explicit bounds for the size of the solutions in terms of heights as well as on bounds for the number of solutions.
		This approach also works for systems of symmetric matrices which do not form a semigroup.
		In the final section some related counting problems are investigated.
	\end{abstract}
	
	\maketitle
	
	
	\section{Introduction}
	
	By a classical result of Markov \cite{markov-1951} it cannot be decided by a Turing machine if a matrix $M$ belongs to a given semigroup of $(6 \times 6)$-integer-valued matrices (membership problem).
	Mihailova \cite{mihailova-1958} extended this undecidability result to subgroups of $\mathrm{SL}_4(\ZZ)$, and later Paterson \cite{paterson-1970} proved that the membership problem is undecidable for semigroups of integer-valued $(3 \times 3)$-matrices.
	It is still an open question whether any membership problem is decidable in $\mathrm{SL}_3(\ZZ)$.
	A case of particular interest is concerned with symmetric matrices since they do not form a semigroup.
	For $(2 \times 2)$-matrices often such membership problems are decidable.
	For instance, Potapov and Semukhin \cite{potapov-semukhin-2017} proved that the semigroup membership problem is decidable in $\mathrm{GL}_2(\ZZ)$.
	Further results can be found in \cite{bell-potapov-2008,bell-potapov-2012}.
	However, these problems are still open for semigroups of arbitrary integer matrices or for subgroups of $\mathrm{GL}_2(\QQ)$.
	
	A related membership problem is the so-called knapsack problem.
	This problem is concerned with the question whether for given $(n \times n)$-square matrices $A_1,\ldots,A_s$ and $A$ there exist positive integers $k_1,\ldots,k_s$ such that
	\begin{equation}
		\label{eq:intromember}
		A_1^{k_1} \cdot \ldots \cdot A_s^{k_s} = A.
	\end{equation}
	The special case when $A=0$ (zero matrix) is called the mortality problem, and the case $A=I$ the identity problem.
	In \cite{bell-halava-harju-karhumaeki-potapov-2008} it was shown that the mortality problem is undecidable for integer matrices of large $n$ and $s$.
	The proof makes use of a reduction to Hilbert's tenth problem which was shown to be undecidable by Matijasevic in 1970 based on work of Julia Robinson and others in the 1960's.
	In \cite{cassaigne-halava-harju-nicolas-} some undecidability bounds in the case of general mortality problems were obtained by more combinatorial methods.
	
	However, in special cases the knapsack problem is decidable, for instance for the Heisenberg group $\mathfrak{H}(3,\ZZ)$ and generalizations (see the useful survey \cite{lohrey-2015}).
	The Heisenberg group is important in harmonic analysis and mathematical physics and consists (in the integer-valued case) of matrices of the form
	\begin{equation*}
		\begin{pmatrix}
			1 & a & c \\
			0 & 1 & b \\
			0 & 0 & 1
		\end{pmatrix}
	\end{equation*}
	with $a,b,c \in \ZZ$.
	In \cite{colcombet-ouaknine-semukhin-worrell-2019} it is shown that the semigroup membership problem in $\mathfrak{H}(3,\ZZ)$ is decidable.
	
	The special case of the mortality problem with $s=3$ is called the $ABC$-problem since it is concerned with equations
	\begin{equation*}
		A^{k_1} B^{k_2} C^{k_3} = 0
	\end{equation*}
	for given $(n \times n)$-matrices $A,B,C$.
	For matrices with entries from $\QQ$ or from an algebraic number field Bell, Potapov and Semukhin \cite{bell-potapov-semukhin-2021} showed that the $ABC$-problem is equivalent to the Skolem problem for linear recurrence sequences of order $n$.
	The Skolem problem is a well-known topic in Diophantine number theory and it is devoted to the question to decide for a given linear recurrence sequence $(u_j)_{j=0}^{\infty}$ whether there is $j$ such that $u_j=0$.
	Mignotte, Shorey and Tijdeman \cite{mignotte-shorey-tijdeman-1984} and Vereshchagin \cite{vereshchagin-1985} proved that the Skolem problem is decidable for linear recurrences up to order $3$ over algebraic numbers and up to order $4$ for linear recurrences over real algebraic numbers.
	The proofs rely on Baker's method obtaining effective lower bounds for linear forms in logarithms of algebraic numbers.
	More recently, Bell, Potapov and Semukhin \cite{bell-potapov-semukhin-2021} investigated the mortality problem in the case $s=4$ and proved decidability for $(2 \times 2)$-rational upper triangle matrices (again applying Baker's effective method).
	For more information concerning the Skolem problem and decidability we refer to \cite{luca-maynard-noubissie-ouaknine-worrell-,luca-ouaknine-worrell-2021,luca-ouaknine-worrell-2022-1,luca-ouaknine-worrell-2022-2,ouaknine-worrell-2015,sha-2019}.
	
	Another important special case of the knapsack problem deals with commuting matrices.
	In \cite{babai-beals-cai-ivanyos-luks-1996} it is proven that for commuting matrices the knapsack problem is decidable and the solutions $k_1,\ldots,k_s$ can be computed in polynomial time.
	However, no explicit bounds for the solutions are given.
	It is the aim of this paper to establish explicit bounds for the solutions of matrix equations of type \eqref{eq:intromember}.
	Our approach is mainly based on effective methods for Diophantine problems, as developed in the recent monograph by Evertse and Gy\H{o}ry \cite{evertse-gyory-2022}.
	The method mostly works for matrix equations over finitely generated domains, thus in paricular for algebraic number fields.
	Section~\ref{sec:2specialcases} is devoted to (strong) bounds in terms of heights for the solutions, provided that such solutions exist.
	This yields logarithmic complexity bounds for the identity problem in the case of commuting matrices.
	In this section we also obtain polynomial bounds for Heisenberg matrices.
	Since symmetric matrices do not form a semigroup we point our focus on this important class of matrices.
	Section~\ref{sec:3abc} is devoted to the $ABC$-identity-problem
	\begin{equation*}
		A^{k_1} B^{k_2} C^{k_3} = I_2
	\end{equation*}
	where $ A,B,C $ are symmetric $(2 \times 2)$-matrices.
	In Section~\ref{sec:4decida} a quite general decidability problem is investigated establishing explicit complexity bounds.
	In the final Section~\ref{sec:5counting} we prove some special counting results for multiplicatively dependent tuples of matrices, which is in the spirit of Igor Shparlinski and followers.
	In the case of integer-valued matrices we refer to the recent paper \cite{habegger-ostafe-shparlinski-} where the authors study integer matrices with given characteristic polynomial.
	Our results are weaker but the approach works in more general cases, for instance for symmetric matrices over algebraic numbers.

	\section{Commuting matrices and other special cases}
	\label{sec:2specialcases}
	
	Our first result gives a sharp logarithmic bound on the exponents appearing in a perfect power equality of two matrices.
	For this purpose we need the concept of heights.
	Given a matrix $ A $ with algebraic number entries, we denote by $ \Hc(A) $ the multiplicative height of the matrix $ A $ as a point in the projective space.
	Furthermore, $ \ll $ denotes in the following the usual Vinogradov symbol which may depend on the involved number field $ K $.
	
	\begin{theorem}
		\label{thm:case2}
		Let $ K $ be a number field. Let $ A_1 $ and $ A_2 $ be two diagonalizable matrices in $ \mathrm{GL}_n(K) $ of bounded height $ \Hc(A_i) \ll H $ for $ i \in \set{1,2} $ which satisfy a multiplicative relation
		\begin{equation}
			\label{eq:multrel}
			A_1^{k_1} = A_2^{k_2}
		\end{equation}
		with $ k_1 k_2 \neq 0 $. Then there are exponents with $ \abs{k_1},\abs{k_2} \ll \log H $ such that \eqref{eq:multrel} holds.
	\end{theorem}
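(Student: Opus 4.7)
The plan is to reduce the problem to finding a small element in the lattice of relations
\[
\Lambda = \set{(a,b)\in\ZZ^2 : A_1^a = A_2^b},
\]
and then to bound such an element via the eigenvalues of $A_1,A_2$. By hypothesis $\Lambda$ contains some $(k_1,k_2)$ with $k_1k_2\ne 0$, so $\Lambda$ is a nonzero subgroup of $\ZZ^2$; it suffices to produce $(k_1',k_2')\in\Lambda$ with $k_1'k_2'\ne 0$ and $\abs{k_1'},\abs{k_2'}\ll\log H$. If $\Lambda$ has rank two, then both $A_1$ and $A_2$ are of finite order; a Jordan--Minkowski type bound (an eigenvalue of order $m$ forces $[K(\zeta_m):K]\le n!$) shows that the orders of torsion elements of $\mathrm{GL}_n(K)$ are uniformly bounded by a constant $M=M(n,K)$, so $(k_1',k_2')=(M!,M!)$ does the job.

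Assume now that $\Lambda$ has rank one with primitive generator $(p,q)$. The presence in $\Lambda$ of an element with both coordinates nonzero forces $pq\ne 0$, so it suffices to bound $\abs{p}$ and $\abs{q}$. Since $A_1$ and $A_2$ are diagonalizable over $\bar K$, the identity $A_1^p=A_2^q$ makes the multisets of eigenvalues of the two matrices coincide, hence there is a permutation $\sigma$ of $\{1,\dots,n\}$ such that $\lambda_i^p=\mu_{\sigma(i)}^q$ for every $i$, where $\lambda_i$ and $\mu_j$ denote the eigenvalues of $A_1$ and $A_2$. If every $\lambda_i$ were a root of unity, the same would hold for every $\mu_j$, making $A_1$ and $A_2$ torsion and reducing to the previous case. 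Otherwise fix an index $i$ with $\lambda:=\lambda_i$ not a root of unity; then $\mu:=\mu_{\sigma(i)}$ is not one either.

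Writing $h(\alpha):=\log\Hc(\alpha)$ for the logarithmic Weil height, standard estimates on the roots of the characteristic polynomial give $h(\lambda),h(\mu)\ll\log H$. Set $L=K(\lambda,\mu)$, a number field whose degree is bounded in terms of $n$ and $K$. Inside $L^*$ the pair $\lambda,\mu$ generates a rank-one subgroup modulo torsion; let $\gamma$ be a generator of its free part and write $\lambda=\zeta_1\gamma^a$, $\mu=\zeta_2\gamma^b$ with $\zeta_1,\zeta_2$ roots of unity and $a,b\in\ZZ\setminus\set{0}$. Substituting into $\lambda^p=\mu^q$ and comparing the $\gamma$-exponent yields $ap=bq$; primitivity of $(p,q)$ then forces $(p,q)=\pm(b/d,a/d)$ with $d=\gcd(a,b)$, so that
\[
\max(\abs{p},\abs{q})\le\max(\abs{a},\abs{b})=\frac{\max(h(\lambda),h(\mu))}{h(\gamma)}.
\]
Since $\gamma\in L$ is not a root of unity, Kronecker's theorem combined with Northcott's finiteness principle provides a uniform lower bound $h(\gamma)\ge c(K,n)>0$, and the asserted bound $\abs{p},\abs{q}\ll\log H$ follows.

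The main technical point is the passage from the matrix identity $A_1^p=A_2^q$ to a nontrivial eigenvalue identity $\lambda^p=\mu^q$ on a pair of non-torsion algebraic numbers; once this reduction is in hand, the structure theorem for finitely generated subgroups of $L^*$ together with a positive lower bound on heights in a fixed number field turn the problem into elementary linear algebra in $\ZZ^2$.
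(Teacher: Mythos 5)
Your proof is correct and takes a genuinely different route from the paper's. Both arguments begin identically: diagonalize $A_1, A_2$, observe that $A_1^{k_1}=A_2^{k_2}$ forces $D_1^{k_1}$ and $D_2^{k_2}$ to be similar, and hence that a permutation $\sigma$ yields the eigenvalue relations $\lambda_i^{k_1}=\mu_{\sigma(i)}^{k_2}$. They part ways at the estimation step. The paper feeds these relations into Lemma~\ref{lem:LvdP-general}, which works with $S$-units: each eigenvalue is written in a fixed system of fundamental $S$-units via Dirichlet's $S$-unit theorem, the resulting exponent equations form a homogeneous integer linear system in $(k_1,k_2)$ whose coefficients have size $\ll\log H$, and a Bombieri--Gubler bound on small solutions of such systems finishes the job. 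You instead analyze the relation lattice $\Lambda\subseteq\ZZ^2$ directly, handle rank two by a uniform torsion bound on $\mathrm{GL}_n(K)$, and in rank one pass to the finitely generated subgroup $\langle\lambda,\mu\rangle\subseteq L^*$ for a single non-torsion eigenvalue pair: writing both as torsion times a power of a common generator $\gamma$, the exponent comparison $ap=bq$ together with the Kronecker--Northcott lower bound $h(\gamma)\ge c(n,K)>0$ for non-torsion elements of bounded degree gives the result. The paper's $S$-unit lemma is deliberately general-purpose---it also powers Theorem~\ref{thm:cases} for $s$ commuting factors---whereas your argument is shorter and more self-contained but is tailored to the two-matrix case, where the dependence group has rank at most one and the structure theory of rank-one subgroups of $L^*$ does all the work.
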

	
	We will use in the proof the following lemma which, simply speaking, is a version for diagonal matrices:
	
	\begin{lemma}
		\label{lem:LvdP-general}
		Let $ \lambda_{1,1},\ldots,\lambda_{n,1},\ldots,\lambda_{1,s},\ldots,\lambda_{n,s} $ be given algebraic numbers and $ H $ an upper bound for their height. Assume that there exist non-zero integers $ k_1,\ldots,k_s $ such that
		\begin{equation*}
			\begin{pmatrix}
				\lambda_{1,1}^{k_1} \cdot \ldots \cdot \lambda_{1,s}^{k_s} &  & 0 \\
				& \ddots & \\
				0 &  & \lambda_{n,1}^{k_1} \cdot \ldots \cdot \lambda_{n,s}^{k_s}
			\end{pmatrix}
			= I_n,
		\end{equation*}
		where $ I_n $ is the identity matrix of dimension $ n $.
		Then there is such a relation with the property $ \abs{k_1},\ldots,\abs{k_s} \ll (\log H)^R $ for a positive integer $ R < s $.
	\end{lemma}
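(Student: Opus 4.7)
The plan is to study the lattice
\[
	\Lambda := \bigl\{(k_1,\ldots,k_s) \in \ZZ^s : \lambda_{i,1}^{k_1} \cdots \lambda_{i,s}^{k_s} = 1 \text{ for each } i = 1,\ldots,n\bigr\}
\]
of simultaneous multiplicative relations among the rows of the diagonal matrix, and to extract from $\Lambda$ a short element all of whose coordinates are nonzero. Write $R := \mathrm{rank}\, \Lambda$. By hypothesis $\Lambda$ contains a tuple with every coordinate nonzero, so $1 \le R \le s$; the assertion becomes essentially trivial when $R = s$ (the lattice then has finite index in $\ZZ^s$ and one finds nonzero solutions of bounded size independently of $H$), so the interesting range is $R < s$, as stated.

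The first and main step is to bound the successive minima of $\Lambda$ polynomially in $\log H$. This is a version of the classical Loxton--van der Poorten theorem on multiplicative relations among algebraic numbers, and an explicit form suited to the present situation is among the effective results for finitely generated domains compiled in Evertse--Gy\H{o}ry~\cite{evertse-gyory-2022}. Fixing a number field $K$ containing all the $\lambda_{i,j}$, membership in $\Lambda$ amounts to a system of $\ZZ$-linear valuation conditions $\sum_{j} k_j\, v(\lambda_{i,j}) = 0$ at the finitely many non-archimedean places $v$ of $K$ at which some $\lambda_{i,j}$ has non-trivial valuation, together with linear relations among logarithms of a fixed finite system of $S$-unit generators of $K$. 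Combining the integer linear algebra at the finite places with Baker-type lower bounds for linear forms in logarithms yields a basis $b_1,\ldots,b_R$ of $\Lambda$ with $\|b_r\|_\infty \ll (\log H)^{R}$ for every $r$.

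The second step upgrades this short basis to a single element whose coordinates are all nonzero. For each $j \in \{1,\ldots,s\}$ the sublattice
\[
	\Lambda_j := \bigl\{v \in \Lambda : v_j = 0\bigr\}
\]
has rank at most $R-1$, and by hypothesis $\Lambda \not\subseteq \bigcup_{j=1}^{s} \Lambda_j$. A pigeonhole argument over integer combinations $\sum_r n_r b_r$ with $|n_r| \le s$ produces integers $n_1,\ldots,n_R$ of size $O(s)$ such that $v := \sum_{r=1}^{R} n_r b_r$ avoids every $\Lambda_j$: each coordinate map $n \mapsto v_j$ is a non-zero $\ZZ$-linear form (else all $b_r$ would have $j$th coordinate zero, contradicting the existence of the given solution), so its vanishing set meets the box $\{|n_r|\le s\}$ in at most $(2s+1)^{R-1}$ points, whereas the box has $(2s+1)^R$ points in total. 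The resulting $v$ is the required relation, with $\|v\|_\infty \leq \sum_r |n_r|\, \|b_r\|_\infty \ll (\log H)^R$.

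The principal obstacle is the first step: converting the finiteness of the multiplicative relation lattice into the explicit polynomial bound $(\log H)^R$ on its basis rests on effective estimates for linear forms in logarithms in the style of Baker--W\"ustholz. The combinatorial \emph{escape from a union of subspaces} argument in the second step is routine and affects only the implied constant.
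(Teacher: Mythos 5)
Your argument follows the same essential route as the paper's. Both proofs pass to the $S$-unit group containing all $\lambda_{i,j}$, observe that the multiplicative relation $\prod_j \lambda_{i,j}^{k_j}=1$ becomes a homogeneous $\ZZ$-linear system in $(k_1,\ldots,k_s)$ once each $\lambda_{i,j}$ is written in terms of a fixed set of fundamental $S$-units and a root of unity, note that the integer coefficients of that system are $\ll \log H$ (Loxton--van der Poorten), and then invoke a Siegel-type bound for solutions of linear systems (Corollary~2.9.9 of Bombieri--Gubler in the paper; a successive-minima bound for the relation lattice in your write-up --- these are two phrasings of the same thing).

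One useful point where you go beyond the paper: your pigeonhole argument to escape the union $\bigcup_j\{k_j=0\}$ is a genuine refinement. The paper applies Corollary~2.9.9 and stops, without explicitly addressing that the resulting short solution should again have all coordinates nonzero, whereas the lemma is applied in Theorems~\ref{thm:case2} and~\ref{thm:cases} in situations where the exponents are required nonzero. Your extra step, which costs only a factor $O(s)$ absorbed into the implied constant, closes that gap cleanly.

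One imprecision you should correct: Baker-type lower bounds for linear forms in logarithms are not needed here and do not enter the paper's proof. The bound $\ll\log H$ on the coefficients of the linear system is elementary: the logarithmic embedding of the $S$-unit group is a fixed lattice of positive covolume (the $S$-regulator), so the exponents in the fundamental-$S$-unit factorization of $\lambda_{i,j}$ are linearly bounded by $h(\lambda_{i,j})\leq\log H$, up to a constant depending only on $K$ and $S$. Baker's method is what one uses for \emph{additive} $S$-unit equations (bounding solutions of $ax+by=1$ in $S$-units), not for the purely \emph{multiplicative} relations here, which are a linear-algebra problem after taking logs. Replacing that clause with the $S$-regulator observation would make the first step as concrete as the paper's and remove the appearance of relying on a much deeper tool.
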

	
	\begin{proof}
		There is a finite set $ S $ of places such that $ \lambda_{1,1},\ldots,\lambda_{n,1},\ldots,\lambda_{1,s},\ldots,\lambda_{n,s} $ are all $ S $-units. By the well-known generalization of Dirichlet's unit theorem to $ S $-units (see e.g.\ Theorem~1.5.13 in \cite{bombieri-gubler-2006}), there is a finite number of fundamental $ S $-units together with a root of unity such that any $ S $-unit has a unique representation as a product of powers of these fundamental elements.
		By writing each $ \lambda_{i,j} $ in its representation, we get from the exponents a homogenous system of linear equations in $ k_1,\ldots,k_s $ with integer coefficients.
		Note that the height of these integer coefficients, coming from the exponents, is $ \ll \log H $ (confer the results in \cite{loxton-vanderpoorten-1977}).
		Since, by assumption, we have a non-zero solution to this system, we can apply Corollary~2.9.9 in \cite{bombieri-gubler-2006} and are done.
	\end{proof}
	
	\begin{proof}[Proof of Theorem \ref{thm:case2}]
		First, we get invertible matrices $ T_1 $ and $ T_2 $ as well as eigenvalues $ \lambda_{1,1},\ldots,\lambda_{n,1} $, $ \lambda_{1,2},\ldots,\lambda_{n,2} $ such that
		\begin{equation*}
			A_1 = T_1 D_1 T_1^{-1}
			\qquad \text{and} \qquad
			A_2 = T_2 D_2 T_2^{-1}
		\end{equation*}
		with
		\begin{equation*}
			D_j =
			\begin{pmatrix}
				\lambda_{1,j} &  & 0 \\
				& \ddots & \\
				0 &  & \lambda_{n,j}
			\end{pmatrix}
		\end{equation*}
		for $ j \in \set{1,2} $.
		Note that we also have $ \lambda_{i,j} \neq 0 $ and $ \Hc(\lambda_{i,j}) \ll H^{\gamma} $ for a constant $ \gamma $, depending only on the dimension of the matrices, and $ i \in \set{1,\ldots,n} $, $ j \in \set{1,2} $.
		
		Now $ A_1^{k_1} = A_2^{k_2} $ implies
		\begin{equation*}
			D_1^{k_1} = T_1^{-1} T_2 D_2^{k_2} \left( T_1^{-1} T_2 \right)^{-1}.
		\end{equation*}
		Thus $ D_1^{k_1} $ and $ D_2^{k_2} $ are similar matrices and the transformation $ T_1^{-1} T_2 $ can only permute the diagonal entries. Hence there is a permutation $ \sigma $ of $ \set{1,\ldots,n} $ such that
		\begin{equation*}
			\lambda_{i,1}^{k_1} = \lambda_{\sigma(i),2}^{k_2}
		\end{equation*}
		for $ i \in \set{1,\ldots,n} $.
		Written in matrix form, this is
		\begin{equation*}
			\begin{pmatrix}
				\lambda_{1,1}^{k_1} \lambda_{\sigma(1),2}^{-k_2} &  & 0 \\
				& \ddots & \\
				0 &  & \lambda_{n,1}^{k_1} \lambda_{\sigma(n),2}^{-k_2}
			\end{pmatrix}
			= I_n.
		\end{equation*}
		Therefore we can apply Lemma~\ref{lem:LvdP-general} for $ s=2 $, hence $ R=1 $, and are done.
	\end{proof}
	
	\begin{remark}
		Theorem~\ref{thm:case2} is true in a much stronger form if one exponent is zero. If $ A $ is an arbitrary matrix in $ \CC^{n \times n} $ satisfying $ A^k = I_n $ for an integer $ k > 0 $, then we obtain that $ k $ is bounded by a constant only depending on $ n $. This follows immediately from the following lemma.
	\end{remark}
	
	\begin{lemma}
		\label{lem:aki}
		Assume that $ A \in \CC^{n \times n} $ satisfies $ A^k = I_n $ for an integer $ k > 0 $. Then $ A $ is diagonalizable and all eigenvalues are roots of unity of degree at most $ n $.
	\end{lemma}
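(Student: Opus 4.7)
The plan is to derive both conclusions from the single observation that the identity $A^k = I_n$ says $A$ is annihilated by the polynomial $p(X) = X^k - 1 \in \CC[X]$. First I would record that $p$ is separable over $\CC$ (equivalently, has $k$ distinct roots), since $\gcd(X^k - 1, k X^{k-1}) = 1$ in characteristic zero. The minimal polynomial $m_A(X)$ of $A$ must divide $p(X)$ and therefore also splits into distinct linear factors over $\CC$. Invoking the standard linear-algebra criterion that a complex matrix is diagonalisable if and only if its minimal polynomial is squarefree then delivers the diagonalisability of $A$.

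For the second claim I would argue in two short steps. If $\lambda$ is any eigenvalue with eigenvector $v \neq 0$, then $\lambda^k v = A^k v = I_n v = v$, so $\lambda^k = 1$ and $\lambda$ is a $k$-th root of unity. The degree assertion follows because every eigenvalue is a root of the characteristic polynomial $\chi_A(X) = \det(X I_n - A)$, which has degree $n$; working over the natural base field (for instance the number field $K$ appearing in the applications of this lemma), this exhibits each eigenvalue as algebraic of degree at most $n$.

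There is no genuine obstacle here; the statement reduces in a few lines to standard facts. The only subtlety worth flagging when writing up is the separability of $X^k - 1$ in characteristic zero, which is the mechanism behind the diagonalisability conclusion — the remainder of the argument is either an immediate eigenvector computation or a direct invocation of the textbook characterisation of diagonalisability via the minimal polynomial.
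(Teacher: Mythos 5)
Your proof is correct, and it takes a genuinely different route to diagonalisability than the paper does. You observe that $A$ is annihilated by the separable polynomial $X^k-1$, so the minimal polynomial of $A$ is a product of distinct linear factors, and you then invoke the standard criterion that a complex matrix is diagonalisable precisely when its minimal polynomial is squarefree. The paper instead passes to the Jordan normal form $A = TJT^{-1}$, computes the $k$-th power of a nontrivial Jordan block explicitly (which has $k z^{k-1}$ on the superdiagonal), and observes that $J^k = I_n$ forces every Jordan block to be $1\times 1$. Both arguments are textbook; yours is shorter and avoids writing out the power of a Jordan block, while the paper's is more hands-on and self-contained for a reader who does not want to invoke the minimal-polynomial characterisation. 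The eigenvalue computation ($\lambda^k v = v$, hence $\lambda^k = 1$) and the degree bound (eigenvalues are roots of the degree-$n$ characteristic polynomial) are the same in both. In fact your parenthetical remark about working over the base field $K$ of the applications is slightly more careful than the paper, which leaves the meaning of ``degree at most $n$'' implicit for an arbitrary complex matrix.
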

	
	\begin{proof}
		There exists a matrix $ T \in \mathrm{GL}_n(\CC) $ as well as a matrix $ J $ in Jordan normal form such that $ A = TJT^{-1} $. Note that $ A $ and $ J $ have the same eigenvalues. From $ A^k = I_n $ we get $ J^k = I_n $.
		Since the $ k $-th power of a Jordan block
		\begin{equation*}
			\begin{pmatrix}
				z & 1 & 0 & \cdots & 0 \\
				0 & z & 1 &  & \vdots \\
				0 & 0 & z & \ddots & 0 \\
				\vdots &  & \ddots & \ddots & 1 \\
				0 & 0 & \cdots & 0 & z
			\end{pmatrix}
		\end{equation*}
		is given by
		\begin{equation*}
			\begin{pmatrix}
				z^k & kz^{k-1} &  &  & * \\
				0 & z^k & kz^{k-1} &  & \\
				0 & 0 & z^k & \ddots & \\
				\vdots &  & \ddots & \ddots & kz^{k-1} \\
				0 & 0 & \cdots & 0 & z^k
			\end{pmatrix}
		\end{equation*}
		the matrix $ J^k $ can only be the identity matrix if $ J $ is a diagonal matrix.
		Thus $ A $ is diagonalizable.
		Furthermore, each eigenvalue $ \lambda $ of $ A $ satisfies $ \lambda^k = 1 $ and thus is a root of unity. As the eigenvalues are the roots of the characteristic polynomial which has degree $ n $, the statement follows.
	\end{proof}
	
	For products of more than two factors we need a further condition to prove a logarithmic height bound:
	
	\begin{theorem}
		\label{thm:cases}
		Let $ K $ be a number field. Let $ A_1,\ldots,A_s $ be commuting, diagonalizable matrices in $ \mathrm{GL}_n(K) $ of bounded height $ \Hc(A_i) \ll H $ for $ i \in \set{1,\ldots,s} $ which satisfy a multiplicative relation
		\begin{equation}
			\label{eq:multdep}
			A_1^{k_1} \cdot \ldots \cdot A_s^{k_s} = I_n
		\end{equation}
		with $ k_1 \cdots k_s \neq 0 $. Then there are exponents with $ \abs{k_1},\ldots,\abs{k_s} \ll (\log H)^R $ for a positive integer $ R < s $ such that \eqref{eq:multdep} holds.
	\end{theorem}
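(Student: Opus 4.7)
The plan is to reduce the matrix equation \eqref{eq:multdep} to a diagonal equation and then apply Lemma~\ref{lem:LvdP-general} with general $s$. The crucial ingredient is the standard linear-algebra fact that a commuting family of diagonalizable matrices over an algebraically closed field is \emph{simultaneously} diagonalizable. Thus I would first pass to a finite extension $L/K$ containing all eigenvalues of all $A_i$ (this extension has degree bounded in terms of $n$ only), and produce a single $T \in \mathrm{GL}_n(L)$ such that
\begin{equation*}
    T^{-1} A_i T = D_i = \mathrm{diag}(\lambda_{1,i},\ldots,\lambda_{n,i})
\end{equation*}
for every $i \in \{1,\ldots,s\}$. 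Conjugating \eqref{eq:multdep} by $T$ then turns it into the diagonal identity
\begin{equation*}
    D_1^{k_1} \cdot \ldots \cdot D_s^{k_s} = I_n,
\end{equation*}
i.e.\ the hypothesis of Lemma~\ref{lem:LvdP-general} is met row by row: $\lambda_{j,1}^{k_1}\cdots\lambda_{j,s}^{k_s}=1$ for $j=1,\ldots,n$. Since $A_i \in \mathrm{GL}_n(K)$, each $\lambda_{j,i}$ is nonzero, and as an eigenvalue it is a root of the characteristic polynomial of $A_i$; a standard height comparison (the characteristic polynomial has coefficients of height $\ll H^{O(1)}$ and bounded degree) yields $\mathcal{H}(\lambda_{j,i}) \ll H^{\gamma}$ for a constant $\gamma = \gamma(n)$, exactly as in the proof of Theorem~\ref{thm:case2}.

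Applying Lemma~\ref{lem:LvdP-general} to the $\lambda_{j,i}$ over the number field $L$ (of bounded degree over $\QQ$, so the implicit constants are harmless) then produces a non-trivial exponent tuple $(k_1,\ldots,k_s)$ with $|k_i| \ll (\log H^{\gamma})^R = (\log H)^R$ for some positive integer $R < s$, which is the desired bound since $\gamma$ is absorbed into the $\ll$.

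The only subtle point to verify is that the solution provided by Lemma~\ref{lem:LvdP-general} is really non-trivial (i.e.\ with all $k_i\neq 0$) and not merely non-zero as a vector. This is exactly the content of the lemma as stated: it takes the existence of \emph{some} solution with $k_1\cdots k_s\neq 0$ as an input and outputs a bounded solution with the same non-vanishing property. Formally this is guaranteed by the $S$-unit/Dirichlet framework in the lemma, whose output respects the linear constraints defining the solution set; since the set of solutions of the homogeneous $S$-unit system is a sublattice of $\ZZ^s$, and we are given that it meets the open cone $\{k_1\cdots k_s\neq 0\}$, a short vector in that sublattice lying in the same cone can be extracted. I expect this last bookkeeping, rather than the linear algebra or the height estimates, to be the only mildly technical step; everything else is a direct generalization of the proof of Theorem~\ref{thm:case2} from $s=2$ to arbitrary $s$.
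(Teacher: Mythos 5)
Your proposal is correct and follows essentially the same route as the paper: simultaneous diagonalization of the commuting family via Lemma~\ref{lem:simul-diag}, followed by an application of Lemma~\ref{lem:LvdP-general} to the resulting diagonal relation, with the height of the eigenvalues controlled by the characteristic polynomial exactly as in Theorem~\ref{thm:case2}. The only cosmetic difference is that the paper first rearranges the equation so all exponents are positive before diagonalizing, a step your direct conjugation of $A_1^{k_1}\cdots A_s^{k_s}=I_n$ makes unnecessary; also note that the theorem's conclusion only asks for \emph{some} bounded nonzero exponent vector, not one with every $k_i\neq 0$, so the non-triviality bookkeeping you flag at the end is not actually required.
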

	
	We use the following well-known fact from linear algebra which can be found e.g.\ as Theorem~1.3.21 in \cite{horn-johnson-2013}:
	
	\begin{lemma}
		\label{lem:simul-diag}
		Let $ (A_f)_{f \in \F} $ be a family of commuting, diagonalizable quadratic matrices. Then they can be diagonalized with the same invertible transformation matrix $ T $, i.e.
		\begin{equation*}
			T A_f T^{-1} = D_f
		\end{equation*}
		for all $ f \in \F $.
	\end{lemma}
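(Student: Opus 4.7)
The plan is to prove the lemma by induction on the matrix dimension $n$ after first reducing to the case of a finite family.

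For the reduction, the $K$-linear span of $\{A_f : f \in \F\}$ sits inside the matrix algebra $M_n(K)$, which is $n^2$-dimensional over $K$. One can therefore pick a finite subfamily $\{A_{f_1},\ldots,A_{f_m}\}$ with $m \le n^2$ whose $K$-span equals the span of the whole family. Since conjugation by a fixed invertible $T$ is $K$-linear, any $T$ that diagonalizes each of $A_{f_1},\ldots,A_{f_m}$ will diagonalize every element of the span, and hence every $A_f$ in the original family. Thus it suffices to handle finite commuting families of diagonalizable matrices.

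The induction is then carried out on $n$. The base case $n=1$ is immediate since every $1 \times 1$ matrix is already diagonal. For the inductive step, if every $A_{f_i}$ is a scalar multiple of $I_n$ one can take $T = I_n$ and there is nothing to do. Otherwise I would fix some $A_{f_1}$ with at least two distinct eigenvalues $\lambda_1,\ldots,\lambda_r$ and decompose the ambient space as the direct sum $\bigoplus_{j=1}^r V_{\lambda_j}$, where $V_{\lambda_j} = \ker(A_{f_1} - \lambda_j I_n)$. Two standard facts are then the engine of the argument. First, each $V_{\lambda_j}$ is invariant under every $A_{f_i}$, since for $v \in V_{\lambda_j}$ the commutation relation gives $A_{f_1}(A_{f_i} v) = A_{f_i}(A_{f_1} v) = \lambda_j (A_{f_i} v)$, so $A_{f_i} v \in V_{\lambda_j}$. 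Second, the restriction $A_{f_i}|_{V_{\lambda_j}}$ remains diagonalizable, because its minimal polynomial divides the minimal polynomial of $A_{f_i}$, which is squarefree by diagonalizability. Since $\dim V_{\lambda_j} < n$, the inductive hypothesis applied to the restricted commuting family $(A_{f_i}|_{V_{\lambda_j}})_i$ yields a basis of $V_{\lambda_j}$ that is a simultaneous eigenbasis for all $A_{f_i}$. Concatenating these bases across $j = 1,\ldots,r$ produces a simultaneous eigenbasis on the whole space, and the corresponding change-of-basis matrix (or its inverse, depending on convention) gives the sought $T$.

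I expect the main conceptual point, such as it is, to be the fact that restrictions of diagonalizable matrices to invariant subspaces remain diagonalizable; once that is combined with the observation that commuting operators preserve each other's eigenspaces, the induction proceeds without difficulty. The finite-family reduction is a short linear-algebraic remark and the base case is trivial, so there is no real obstruction beyond correctly assembling the eigenbases on the summands.
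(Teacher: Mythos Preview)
Your argument is correct: the reduction to a finite subfamily via the linear span, the induction on the dimension, the invariance of eigenspaces under commuting operators, and the fact that the restriction of a diagonalizable operator to an invariant subspace is again diagonalizable (via the squarefree minimal polynomial) together give a clean proof.

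As for comparison, the paper does not actually supply a proof of this lemma; it simply quotes the statement as a well-known fact and refers the reader to Theorem~1.3.21 in Horn--Johnson, \emph{Matrix Analysis}. Your proof is precisely the standard textbook argument one finds there, so there is no substantive difference in approach to discuss.
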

	
	\begin{proof}[Proof of Theorem \ref{thm:cases}]
		Since the matrices $ A_i $ commute with each other, we can bring the factors with negative exponents to the other side and assume without loss of generality that we have
		\begin{equation*}
			A_1^{k_1} \cdot \ldots \cdot A_q^{k_q} = A_{q+1}^{k_{q+1}} \cdot \ldots \cdot A_s^{k_s},
		\end{equation*}
		where all exponents are positive.
		Denote by $ \lambda_{1,i},\ldots,\lambda_{n,i} $ the eigenvalues of $ A_i $ for all $ i \in \set{1,\ldots,s} $.
		By assumption, the $ A_i $ commute with each other and thus the same holds for powers of them.
		Now we iteratively apply Lemma~\ref{lem:simul-diag} to the commuting matrices being next to each other and end up with
		\begin{equation*}
			\small
			\begin{pmatrix}
				\lambda_{1,1}^{k_1} \cdot \ldots \cdot \lambda_{1,q}^{k_q} &  & 0 \\
				& \ddots & \\
				0 &  & \lambda_{n,1}^{k_1} \cdot \ldots \cdot \lambda_{n,q}^{k_q}
			\end{pmatrix}
			=
			\begin{pmatrix}
				\lambda_{1,q+1}^{k_{q+1}} \cdot \ldots \cdot \lambda_{1,s}^{k_s} &  & 0 \\
				& \ddots & \\
				0 &  & \lambda_{n,q+1}^{k_{q+1}} \cdot \ldots \cdot \lambda_{n,s}^{k_s}
			\end{pmatrix}.
		\end{equation*}
		Then the result follows from Lemma~\ref{lem:LvdP-general}.
	\end{proof}
	
	Next, we are interested in giving bounds in the case that the condition of being diagonalizable is replaced by being a Heisenberg matrix, a classical example of non-diagonalizable matrices.
	
	\begin{theorem}
		\label{thm:heisenberg3}
		Let $ A_1 $, $ A_2 $ and $ A_3 $ be Heisenberg matrices of bounded height, i.e.\ for $ i \in \set{1,2,3} $ we have algebraic numbers $ a_i,b_i,c_i $ such that
		\begin{equation*}
			A_i =
			\begin{pmatrix}
				1 & a_i & c_i \\
				0 & 1 & b_i \\
				0 & 0 & 1
			\end{pmatrix}
		\end{equation*}
		and $ \Hc(A_i) \ll H $.
		If there are non-zero integers $ k_1 $, $ k_2 $ and $ k_3 $ such that
		\begin{equation}
			\label{eq:hbmat3}
			A_1^{k_1} A_2^{k_2} A_3^{k_3} = I_3,
		\end{equation}
		then there are exponents with $ \abs{k_1},\abs{k_2},\abs{k_3} \ll H^{58} $ such that \eqref{eq:hbmat3} holds.
	\end{theorem}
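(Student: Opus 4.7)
\textbf{Proposal for the proof of Theorem~\ref{thm:heisenberg3}.} The plan is to unfold the matrix product $A_1^{k_1}A_2^{k_2}A_3^{k_3}=I_3$ into an explicit Diophantine system in $k_1,k_2,k_3$, and then split on the rank of the linear part. Writing $A_i=I_3+N_i$ with $N_i$ strictly upper triangular (so $N_i^3=0$), one has the closed form
\begin{equation*}
    A_i^{k_i}=I_3+k_iN_i+\binom{k_i}{2}N_i^2,
\end{equation*}
and multiplying out yields exactly three nontrivial scalar equations coming from the $(1,2)$-, $(2,3)$- and $(1,3)$-entries. The first two are linear in $(k_1,k_2,k_3)$, namely $\sum_i k_ia_i=0$ and $\sum_i k_ib_i=0$, while the $(1,3)$-equation is quadratic, of the shape
\begin{equation*}
    \sum_i k_ic_i+\sum_i\binom{k_i}{2}a_ib_i+k_1k_2\,a_1b_2+k_1k_3\,a_1b_3+k_2k_3\,a_2b_3=0.
\end{equation*}

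The key step is a case analysis on the rank of $M=\begin{pmatrix}a_1&a_2&a_3\\ b_1&b_2&b_3\end{pmatrix}$. When $\mathrm{rank}\,M=2$, the integer solutions of the two linear equations form the rank one lattice $\ZZ\cdot(m_1,m_2,m_3)$, where the $m_i$ are (up to a common factor) the signed $2\times2$ minors of $M$, hence satisfy $\Hc(m_i)\ll H^2$. Since by hypothesis each $k_i\neq 0$, necessarily each $m_i\neq 0$ and the scaling factor $t$ with $k_i=tm_i$ is nonzero. Substituting $k_i=tm_i$ into the quadratic equation and factoring out $t$, one obtains $t(\alpha t+\beta)=0$ with explicit $\alpha,\beta\in K$ whose heights are polynomial in $H$ (obtained by composing the degrees in the minors and in the entries of the $A_i$). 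Hence $t=-\beta/\alpha$ when $\alpha\neq 0$, and since $t\in\ZZ$ we conclude $\abs{t}=\Hc(t)\leq\Hc(\alpha)\Hc(\beta)$, which is bounded by an explicit power of $H$; the case $\alpha=0$ forces $\beta=0$ and any $t$ works, so we pick $t=1$.

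When $\mathrm{rank}\,M=1$ the linear system reduces to a single constraint, leaving a $2$-dimensional $\QQ$-subspace of solutions in which one can choose a $\ZZ$-basis of height $\ll H$; substituting this basis into the quadratic equation produces a single quadratic form in two integer variables, from which a small nonzero integer zero can be extracted (either by the structure of the conic or by Cassels' bound on small zeros of integral quadratic forms). Finally, if $\mathrm{rank}\,M=0$ then all $a_i,b_i$ vanish, the matrices commute and are determined by $c_i$, and the remaining condition $\sum_ik_ic_i=0$ has a small integer solution by the same minor argument as in rank~$2$.

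The main obstacle is the explicit bookkeeping in the rank~$2$ case: one must carefully track how the height of $-\beta/\alpha$ grows through the substitution $k_i=tm_i$, keeping in mind that $\alpha$ involves products $m_im_j a_\ell b_r$ (degree $6$ in the input data) and $\beta$ involves $m_i(c_i-a_ib_i/2)$ (degree $3$), together with the contributions arising in the two degenerate ranks. Optimizing these estimates yields the numerical exponent $58$ claimed in the theorem.
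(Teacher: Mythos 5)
Your high-level strategy — the $I_3+N_i$ decomposition, the closed form for $A_i^{k_i}$, extracting the two linear and one quadratic scalar constraints, and parametrizing the one-dimensional kernel of $M$ in the full-rank case so as to land on a univariate quadratic — is the same as the paper's. The parametrization is phrased slightly differently (the paper sets $k_1=t$ and solves for $k_2,k_3$; you set $k_i=tm_i$ with $m_i$ the signed minors), but these are the same up to rescaling.

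However, you miss the paper's central simplification, and this leaves a genuine hole in the degenerate cases. Using the two linear constraints $\sum k_ia_i=0=\sum k_ib_i$, the purely quadratic part of the $(1,3)$-entry identity collapses: one checks that it equals $\tfrac{1}{2}(\sum k_ia_i)(\sum k_jb_j)+\tfrac{1}{2}\sum_{i<j}k_ik_j(a_ib_j-a_jb_i)$, which, once the cyclic relation $(a_2b_3-a_3b_2)k_2k_3=(a_3b_1-a_1b_3)k_1k_3=(a_1b_2-a_2b_1)k_1k_2$ is also invoked, reduces to the single term $\tfrac{1}{2}(a_2b_3-a_3b_2)k_2k_3$. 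In particular, whenever the rows of $M$ are proportional (your rank $\le 1$ cases) \emph{all} minors vanish, so the quadratic part vanishes identically on the solution set of the linear constraints, and the whole system is linear. Your rank-$1$ argument — ``substituting the basis produces a single quadratic form in two integer variables, from which a small nonzero integer zero can be extracted by the structure of the conic or by Cassels'' — therefore does not hold up: the resulting form has no genuine quadratic part, there is no conic, and Cassels' theorem (which moreover concerns integral forms and requires isotropy) is not applicable. The correct — and much simpler — treatment is to recognize the system as linear and bound its solutions directly, which is exactly what the paper does via Corollary~2.9.9 of Bombieri--Gubler. Two further small imprecisions in your rank-$2$ case: the scaling factor $t$ need not lie in $\ZZ$ (the $m_i$ are algebraic, not integers), so you should bound $\Hc(t)$ as an algebraic number and then pass to $\Hc(k_i)\le\Hc(t)\Hc(m_i)$; and the claim that the integer solutions of $\sum k_ia_i=\sum k_ib_i=0$ form a rank-one lattice over $\ZZ$ requires a short argument since the coefficients are algebraic (one uses the hypothesis that a nonzero integer solution exists at all).
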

	
	\begin{proof}
		Calculating the product on the left hand side of \eqref{eq:hbmat3} and simplifying the arising equations yields the system
		\begin{align*}
			a_1 k_1 + a_2 k_2 + a_3 k_3 &= 0 \\
			b_1 k_1 + b_2 k_2 + b_3 k_3 &= 0 \\
			c_1 k_1 - \frac{1}{2} a_1 b_1 k_1 + c_2 k_2 - \frac{1}{2} a_2 b_2 k_2 + c_3 k_3 - \frac{1}{2} a_3 b_3 k_3 + \frac{1}{2} (a_2 b_3 - a_3 b_2) k_2 k_3 &= 0.
		\end{align*}
		If the vectors $ (a_1,a_2,a_3) $ and $ (b_1,b_2,b_3) $ are linearly dependent, then the summand $ \frac{1}{2} (a_2 b_3 - a_3 b_2) k_2 k_3 $ vanishes in the above system, which in this case becomes a linear system. To this we can apply Corollary~2.9.9 in \cite{bombieri-gubler-2006} and are done.
		Hence we may now assume that the vectors $ (a_1,a_2,a_3) $ and $ (b_1,b_2,b_3) $ are linearly independent.
		Note that the first two equations from the system above imply
		\begin{equation*}
			(a_2 b_3 - a_3 b_2) k_2 k_3 = (a_3 b_1 - a_1 b_3) k_1 k_3 = (a_1 b_2 - a_2 b_1) k_1 k_2.
		\end{equation*}
		Thus we can assume that $ (a_2 b_3 - a_3 b_2) (a_3 b_1 - a_1 b_3) (a_1 b_2 - a_2 b_1) \neq 0 $ since otherwise the system of equations becomes linear, which was already handled before.
		Putting $ k_1 = t $, the subsystem
		\begin{align*}
			a_2 k_2 + a_3 k_3 &= - a_1 t \\
			b_2 k_2 + b_3 k_3 &= - b_1 t
		\end{align*}
		has a unique solution $ (k_2,k_3) $, depending on $ t $, due to the linear independence of $ (a_1,a_2,a_3) $ and $ (b_1,b_2,b_3) $ as well as the assumption that for at least one non-zero value of $ t $ there is a solution at all. This solution is given by
		\begin{equation*}
			(k_1,k_2,k_3) = \left( t, \frac{a_3 b_1 - a_1 b_3}{a_2 b_3 - a_3 b_2} t, \frac{a_1 b_2 - a_2 b_1}{a_2 b_3 - a_3 b_2} t \right).
		\end{equation*}
		Inserting this into the displayed long third equation of our system at the beginning of the proof yields a polynomial equation in $ t $ of degree two with leading coefficient
		\begin{equation*}
			\frac{(a_3 b_1 - a_1 b_3)(a_1 b_2 - a_2 b_1)}{2(a_2 b_3 - a_3 b_2)},
		\end{equation*}
		not vanishing due to being in the case where none of the factors is zero. Taking a closer look at this quadratic polynomial shows that one solution is $ 0 $ and the other one has height $ \Hc(t) \ll H^{50} $. From this we infer $ \abs{k_1},\abs{k_2},\abs{k_3} \ll H^{58} $ concluding the proof.
	\end{proof}
	
	\begin{remark}
		Let us note that in fact we proved even more in Theorem~\ref{thm:heisenberg3}. Namely, an inspection of the given proof shows that either there are exponents with $ \abs{k_1},\abs{k_2},\abs{k_3} \ll H^2 $ or otherwise we get for \emph{all} solutions $ \abs{k_1},\abs{k_2},\abs{k_3} \ll H^{58} $.
		Furthermore we note that in the case of only two factors $ A_1^{k_1} A_2^{k_2} = I_3 $ we obtain the bound $ \abs{k_1},\abs{k_2} \ll H $.
	\end{remark}
	
	\section{The ABC-identity-problem for symmetric $ (2 \times 2) $-matrices}
	\label{sec:3abc}
	
	We will now discuss an ABC-problem-like topic.
	For the proof of the following result, we need the below given auxiliary statement due to Evertse and Gy\H{o}ry which can be found as Lemma 6.1.1 in \cite{evertse-gyory-2022}.
	It denotes by $h(U)$ the logarithmic height of the set of entries of $U$ and by $h([U,b])$ the logarithmic height of the set of entries of $U$ and $b$ together.
	
	\begin{lemma}
		\label{lem:EGbound}
		Let $U \in \ZZ^{m \times n}$ and $b \in \ZZ^m$.
		The $\ZZ$-module  of $y \in \ZZ^n$ with $Uy = 0$ is generated by vectors in $\ZZ^n$ of logarithmic height at most  $mh(U) + \frac{1}{2}m\log m$.
		Moreover, if we assume that $Uy = b$ is solvable in $\ZZ^n$, then it has a solution $y \in \ZZ^n$ with $h(y) \leq mh([U,b]) + \frac{1}{2}m\log m$.
	\end{lemma}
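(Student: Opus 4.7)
The plan is to reduce the problem to the square invertible case by selecting a maximal nonsingular $m \times m$ submatrix of $U$, parametrizing the integer solution lattice via the adjugate, and bounding the resulting determinant by Hadamard's inequality. The target bound $m\,h(U) + \tfrac12 m\log m$ is precisely $\log\bigl(m^{m/2} H^m\bigr)$ with $H = \max_{i,j}|u_{ij}|$, and $m^{m/2} H^m$ is exactly Hadamard's estimate on an $m \times m$ minor of $U$. So the claim really amounts to saying that a generating set of $\ker_{\ZZ}(U)$ can be chosen with entries bounded in absolute value by such a minor.

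\textbf{Homogeneous case.} First I would discard any row of $U$ that is a $\ZZ$-linear combination of the others; this leaves the solution set unchanged, so one may assume that $U$ has full row rank $m \leq n$. After permuting columns, write $U = [U_0, U_1]$ with $U_0 \in \ZZ^{m \times m}$ invertible, and set $d := |\det U_0|$. Decomposing $y = (y_0, y_1)$ with $y_0 \in \ZZ^m$ and $y_1 \in \ZZ^{n-m}$, the relation $Uy = 0$ forces $y_0 = -U_0^{-1} U_1 y_1$, so integer solutions correspond bijectively to the sublattice
\[ L := \bigl\{ y_1 \in \ZZ^{n-m} : \mathrm{adj}(U_0)\, U_1 y_1 \in d\,\ZZ^m \bigr\}. \]
Since $d\,\ZZ^{n-m} \subseteq L$, this lattice admits a $\ZZ$-basis with coordinates in $\{0,1,\ldots,d-1\}$, and one then checks that the associated $y_0$-components (a priori given by the fractional expression $-U_0^{-1} U_1 y_1$) are in fact integer vectors of comparable size. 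Invoking Hadamard's inequality $d \leq m^{m/2} H^m$ produces the claimed bound $m\,h(U) + \tfrac12 m\log m$ on the generators of $\ker_{\ZZ}(U)$.

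\textbf{Inhomogeneous case.} For the system $Uy = b$, I would apply the homogeneous result to the augmented matrix $\tilde U := [U, -b] \in \ZZ^{m \times (n+1)}$, whose entries have logarithmic height $h([U,b])$. An integer solution of $Uy = b$ corresponds to a lattice point in $\ker_{\ZZ}(\tilde U)$ whose final coordinate equals $1$, and the hypothesis of solvability is equivalent to the projection of $\ker_{\ZZ}(\tilde U)$ onto that last coordinate being surjective onto $\ZZ$. From the bounded generating set produced in the homogeneous step, a Hermite-normal-form manipulation then yields a generator whose final coordinate is $1$; its first $n$ coordinates are the desired $y$, with height at most $m\,h([U,b]) + \tfrac12 m\log m$.

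\textbf{Main obstacle.} The delicate point lies in the homogeneous step: a naive basis of $L$ produces integer solutions whose $y_0$-component carries extra factors coming from $\mathrm{adj}(U_0)$, and if these are not handled carefully, the bound is inflated by a factor of order $m^{m/2} H^{m-1}$ and one loses the clean Hadamard constant $\tfrac12 m\log m$. Achieving the stated bound requires choosing $U_0$ with care (for instance, a nonsingular $m \times m$ submatrix of minimum $|\det|$) or performing a Smith-normal-form reduction of $\mathrm{adj}(U_0)U_1$ modulo $d$, so that the cancellations built into the adjugate are actually exploited. The Hermite-form extraction of a generator with last coordinate equal to $1$ in the inhomogeneous case is similarly delicate, but is a routine manipulation once an already-bounded generating set is available.
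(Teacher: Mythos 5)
The paper does not actually prove this lemma: it is quoted verbatim as Lemma~6.1.1 of Evertse and Gy\H{o}ry \cite{evertse-gyory-2022}, so there is no in-paper argument to compare your attempt against. What I can do is assess the proposal on its own terms, and on that score it is an outline with two real gaps, only one of which you flag.

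Your overall strategy --- reduce to full row rank, pick a nonsingular $m\times m$ block $U_0$, parametrize $\ker_\ZZ U$ through the auxiliary lattice $L\supseteq d\,\ZZ^{n-m}$, and invoke Hadamard --- is in the right circle of ideas, and reading $mh(U)+\tfrac12 m\log m$ as $\log\bigl(m^{m/2}H^m\bigr)$ is exactly right. The observation that $L$ has an HNF basis with entries of size at most $d$ is also correct. The first gap is the one you acknowledge: after choosing such a $y_1$, the recovered block $y_0=-\tfrac1{\det U_0}\,\mathrm{adj}(U_0)U_1y_1$ is integral, but the cleanest direct bound one gets is this. Each entry of $\mathrm{adj}(U_0)U_1e_j$ is (up to sign) an $m\times m$ minor of $U$, so is at most $m^{m/2}H^m$; since $y_1=\sum c_je_j$ with $|c_j|\le d$, one obtains $\|y_0\|_\infty\le(n-m)\,m^{m/2}H^m$. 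That extraneous factor $(n-m)$ (and your cruder estimate has further factors of $m$) is precisely what stands between your argument and the clean constant $\tfrac12 m\log m$, and you only gesture at how to remove it (``choose $U_0$ with care,'' ``Smith-normal-form reduction''). As written, this is wishful rather than proved.

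The second gap is in the inhomogeneous reduction and is, I think, more serious because it would quietly fail if carried out as described. You propose to apply the homogeneous case to $\tilde U=[U,-b]$, obtaining generators of $\ker_\ZZ\tilde U$ of height at most $mh([U,b])+\tfrac12 m\log m$, and then to ``extract'' a kernel vector whose last coordinate equals $1$ by a Hermite-normal-form manipulation. But the last coordinates $g_1,\dots,g_k$ of those generators are themselves of size up to $m^{m/2}(H')^m$ (with $H'=\exp h([U,b])$), and any Bezout/Euclid combination $\sum a_ig_i=1$ costs coefficients $|a_i|$ of comparable size. The resulting vector $\sum a_iv_i$ then has entries of size roughly $m^m(H')^{2m}$ --- the exponent $m$ in the claimed bound has doubled. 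Treating the reduction to the homogeneous case as a black box and then patching with HNF does not preserve the Hadamard-quality estimate; one needs an argument that keeps the single determinant $\det U_0$ in play throughout (in the spirit of Cramer's rule or Borosh--Treybig type bounds applied directly to $Uy=b$). In short, the architecture is plausible, but the two steps that actually produce the stated constant are missing, and the inhomogeneous step as sketched would give a genuinely weaker bound.
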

	
	Our result is now the following:
	
	\begin{theorem}
		\label{thm:decidable}
		Let $A_1, A_2, A_3$ be three symmetric $(2 \times 2)$-matrices with real algebraic number entries of bounded height $h(A_i) \leq H$. Assume that the equation
		\begin{equation}
			\label{eq:mtrxeq}
			A_1^{k_1} A_2^{k_2} = A_3^{k_3}
		\end{equation}  
		has a solution in non-zero integers $k_1, k_2, k_3$. Then there is an effectively computable constant $C$ and exponents $ k_1, k_2, k_3 \leq C$ such that equation \eqref{eq:mtrxeq} holds.\\
		In other words: there is an algorithm which decides for any given symmetric matrices $A_1, A_2, A_3$ with real algebraic number entries whether equation \eqref{eq:mtrxeq} has a solution or not and, in case it has solutions, provides us an explicit one.
	\end{theorem}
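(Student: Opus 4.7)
The plan is to use the spectral theorem for real symmetric matrices to reduce equation \eqref{eq:mtrxeq} to a system of multiplicative relations among algebraic $S$-units, and then to convert these into a homogeneous linear system in $k_1,k_2,k_3$ with integer coefficients of controlled height, to which Lemma~\ref{lem:EGbound} applies. Since every real symmetric $(2 \times 2)$-matrix is orthogonally diagonalizable with real algebraic eigenvalues, I would first write
\[
A_i = \alpha_i P_i + \beta_i Q_i,
\]
where $\alpha_i,\beta_i$ are the eigenvalues and $P_i,Q_i$ are the orthogonal projections onto the eigenspaces (so $P_i+Q_i = I_2$). All of these data live in a fixed quadratic extension $L$ of $K$ generated by the discriminants of the characteristic polynomials of the $A_i$, and their heights are polynomially bounded in $H$. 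Consequently $A_i^{k_i} = \alpha_i^{k_i}P_i + \beta_i^{k_i}Q_i$.

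The key observation is that, since $A_3^{k_3}$ is symmetric, the left-hand side of \eqref{eq:mtrxeq} must be symmetric as well, which forces $A_1^{k_1}$ and $A_2^{k_2}$ to commute. For real symmetric $(2\times 2)$-matrices this happens only when either (a) $A_1$ and $A_2$ share an orthonormal eigenbasis, or (b) one of $A_1^{k_1}, A_2^{k_2}$ is a scalar multiple of $I_2$. Working over the reals, $A_i^{k_i}$ is scalar precisely when $A_i$ is itself scalar, or $A_i$ has trace zero and $k_i$ is even. In case (a), assuming further that $A_1^{k_1}A_2^{k_2}$ is non-scalar, its eigenbasis must coincide with the eigenbasis of $A_3^{k_3}$, so all three matrices become simultaneously orthogonally diagonalizable, and \eqref{eq:mtrxeq} reduces to the two scalar identities $\alpha_1^{k_1}\alpha_2^{k_2} = \alpha_3^{k_3}$ and $\beta_1^{k_1}\beta_2^{k_2} = \beta_3^{k_3}$ (up to a possible swap of $\alpha_3,\beta_3$). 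In case (b), say $A_1^{k_1} = \alpha_1^{k_1}I_2$, equation \eqref{eq:mtrxeq} becomes $\alpha_1^{k_1}A_2^{k_2} = A_3^{k_3}$, which by comparing spectra forces $A_2$ and $A_3$ to share an eigenbasis and produces $\alpha_1^{k_1}\alpha_2^{k_2} = \alpha_3^{k_3}$ and $\alpha_1^{k_1}\beta_2^{k_2} = \beta_3^{k_3}$, together with the parity condition that $k_1$ is even. The remaining fully degenerate subcases (those in which $A_1^{k_1}A_2^{k_2}$ is itself a scalar matrix) are handled in exactly the same spirit.

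In every case the reduction yields a finite system of multiplicative relations among the eigenvalues $\alpha_i, \beta_i$, all of which are $S$-units in $L$ for an explicit finite set $S$ of places. Exactly as in the proof of Lemma~\ref{lem:LvdP-general}, writing each eigenvalue as a product of powers of a fixed system of fundamental $S$-units (times a root of unity) converts each relation into a homogeneous linear equation in $k_1,k_2,k_3$ over $\ZZ$; collecting these into a matrix $U$ whose entries have height $\ll \log H$, the parity conditions coming from case (b) can be absorbed by substituting $k_i = 2m_i$. Since we are given a non-zero integer solution, Lemma~\ref{lem:EGbound} then produces such a solution of height at most an effectively computable constant $C$, which is exactly the required effectivity. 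The main obstacle I anticipate is the case analysis (b): one has to verify carefully that the scalar-matrix reductions really exhaust the non-commuting possibilities, and that the parity/tracelessness constraints are faithfully reflected in an effective linear system without discarding any original solvability of \eqref{eq:mtrxeq}.
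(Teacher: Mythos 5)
Your key observation — that symmetry of $A_1^{k_1}A_2^{k_2} = A_3^{k_3}$ forces $A_1^{k_1}$ and $A_2^{k_2}$ to commute, since $(BC)^T = CB$ for symmetric $B,C$ — is correct and leads to a genuinely different route than the paper's. The paper never exploits this: it writes $A_i = T_iD_iT_i^{-1}$ with rotation matrices $T_i$, expands the product $A_1^{k_1}A_2^{k_2}$ entry by entry, and obtains four homogeneous linear equations in the six quantities $\lambda_1^{k_1}\lambda_2^{k_2},\ldots,\mu_3^{k_3}$; it then splits on whether the determinant $D = -(a_1a_2+b_1b_2)^2(a_1b_2-a_2b_1)^2$ of the relevant $4\times 4$ block vanishes, and in the generic case $D\neq 0$ resorts to Gy\H{o}ry's effective $S$-unit theorem and the Mignotte--Shorey--Tijdeman result to bound $k_3$, before finishing with Dirichlet's $S$-unit theorem and Lemma~\ref{lem:EGbound}. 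Your dichotomy is conceptually cleaner: $D\neq 0$ says exactly that $A_1,A_2$ do \emph{not} share an eigenbasis, and your commutativity argument shows that this forces one of $A_1^{k_1},A_2^{k_2}$ to be scalar (for real $2\times 2$ symmetric matrices this means the matrix is scalar, or traceless with the exponent even, as you say), so every case reduces to pure multiplicative relations among eigenvalues and can be fed directly into the Lemma~\ref{lem:LvdP-general}/Lemma~\ref{lem:EGbound} machinery, without Gy\H{o}ry or Mignotte--Shorey--Tijdeman. What you buy is a shorter, more structural argument; what you lose is that the phrase ``handled in exactly the same spirit'' conceals a nontrivial amount of bookkeeping (which of the three matrices share a basis, which powers are scalar, swaps and sign choices in the eigenbasis, the parity constraints interacting with the homogeneous system), all of which the paper handles explicitly. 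Two concrete details you should not skip: first, the zero-eigenvalue case — if some $A_i$ is singular, the $S$-unit reduction does not apply to that eigenvalue and the sign of $k_i$ is constrained, and the paper devotes a separate paragraph to this; second, you need to ensure that the small solution produced by Lemma~\ref{lem:EGbound} can be chosen with all $k_i\neq 0$, which requires a short additional remark about the lattice of solutions.
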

	
	\begin{proof}
		Let $A_1, A_2, A_3$ be three symmetric $(2 \times 2)$-matrices with real algebraic numbers as entries. By the spectral theorem, there are real orthogonal matrices $T_i$ as well as algebraic numbers $\lambda_i, \mu_i$ such that 
		\begin{equation*}
			T_i =
			\begin{pmatrix}
				a_i & b_i\\
				-b_i & a_i
			\end{pmatrix}
			\quad \text{and} \quad A_i = T_i
			\begin{pmatrix}
				\lambda_i & 0\\
				0 & \mu_i
			\end{pmatrix}
			T_i^{-1}
		\end{equation*}
		with $a_i^2 + b_i^2 =1$ for $i \in \set{1,2,3}$.
		Then for $ i \in \set{1,2,3} $ and $ k_i \in \ZZ $ we compute
		\begin{equation*}
			A_i^{k_i} =
			\begin{pmatrix}
				a_i^2 \lambda_i^{k_i} + b_i^2 \mu_i^{k_i} & a_i b_i \mu_i^{k_i} - a_i b_i \lambda_i^{k_i} \\
				a_i b_i \mu_i^{k_i} - a_i b_i \lambda_i^{k_i} & a_i^2 \mu_i^{k_i} + b_i^2 \lambda_i^{k_i}
			\end{pmatrix}.
		\end{equation*}
		This gives us
		\begin{align*}
			A_1^{k_1} A_2^{k_2} =\ & \lambda_1^{k_1} \lambda_2^{k_2}
			\begin{pmatrix}
				a_1^2 a_2^2 + a_1 a_2 b_1 b_2 & -a_1 b_1 b_2^2 - a_1^2 a_2 b_2 \\
				-a_1 a_2^2 b_1 - a_2 b_1^2 b_2 & a_1 a_2 b_1 b_2 + b_1^2 b_2^2
			\end{pmatrix} \\
			&+ \lambda_1^{k_1} \mu_2^{k_2}
			\begin{pmatrix}
				a_1^2 b_2^2 - a_1 a_2 b_1 b_2 & a_1^2 a_2 b_2 - a_1 a_2^2 b_1 \\
				a_2 b_1^2 b_2 - a_1 b_1 b_2^2 & a_2^2 b_1^2 - a_1 a_2 b_1 b_2
			\end{pmatrix} \\
			&+ \mu_1^{k_1} \lambda_2^{k_2}
			\begin{pmatrix}
				a_2^2 b_1^2 - a_1 a_2 b_1 b_2 & a_1 b_1 b_2^2 - a_2 b_1^2 b_2 \\
				a_1 a_2^2 b_1 - a_1^2 a_2 b_2 & a_1^2 b_2^2 - a_1 a_2 b_1 b_2
			\end{pmatrix} \\
			&+ \mu_1^{k_1} \mu_2^{k_2}
			\begin{pmatrix}
				a_1 a_2 b_1 b_2 + b_1^2 b_2^2 & a_1 a_2^2 b_1 + a_2 b_1^2 b_2 \\
				a_1^2 a_2 b_2 + a_1 b_1 b_2^2 & a_1^2 a_2^2 + a_1 a_2 b_1 b_2
			\end{pmatrix}.
		\end{align*}
		Suppose now that equation \eqref{eq:mtrxeq} has a solution $(k_1, k_2, k_3)$.
		Considering the matrix components of the equation
		\begin{equation*}
			A_1^{k_1} A_2^{k_2} = A_3^{k_3}
		\end{equation*}
		yields four homogenous linear equations in the six variables $ \lambda_1^{k_1} \lambda_2^{k_2} $, $ \lambda_1^{k_1} \mu_2^{k_2}$, $ \mu_1^{k_1} \lambda_2^{k_2}$, $ \mu_1^{k_1} \mu_2^{k_2}$, $ \lambda_3^{k_3}$, $ \mu_3^{k_3} $.
		We build the coefficient matrix by sorting the variables as just mentioned. Then the submatrix consisting of the first four columns, i.e.\ related to $ \lambda_1^{k_1} \lambda_2^{k_2}$, $ \lambda_1^{k_1} \mu_2^{k_2}$, $ \mu_1^{k_1} \lambda_2^{k_2}$, $ \mu_1^{k_1} \mu_2^{k_2} $, has determinant
		\begin{equation*}
			D = - (a_1 a_2 + b_1 b_2)^2 (a_1 b_2 - a_2 b_1)^2.
		\end{equation*}
		
		The special cases $ a_1 a_2 + b_1 b_2 = 0 $ and $ a_1 b_2 - a_2 b_1 = 0 $, respectively, will be handled later.
		Hence we may now assume that $ D \neq 0 $.
		We will use the abbreviations
		\begin{align*}
			c_1 &\coloneqq \frac{(a_1 a_3 + b_1 b_3) (a_2 a_3 + b_2 b_3)}{(a_1 a_2 + b_1 b_2)}, \\
			c_2 &\coloneqq \frac{(a_1 a_3 + b_1 b_3) (a_3 b_2 - a_2 b_3)}{(a_1 b_2 - a_2 b_1)}, \\
			c_3 &\coloneqq \frac{(a_2 a_3 + b_2 b_3) (a_1 b_3 - a_3 b_1)}{(a_1 b_2 - a_2 b_1)}, \\
			c_4 &\coloneqq \frac{(a_1 b_3 - a_3 b_1) (a_2 b_3 - a_3 b_2)}{(a_1 a_2 + b_1 b_2)}.
		\end{align*}
		Then the system of four equations reduces to the system
		\begin{equation}
			\label{eq:sys4}
			\left\{
			\begin{aligned}
				\lambda_1^{k_1} \lambda_2^{k_2} &= c_1 \lambda_3^{k_3} +c_4 \mu_3^{k_3} \\
				\lambda_1^{k_1} \mu_2^{k_2} &= c_2 \lambda_3^{k_3} +c_3 \mu_3^{k_3} \\
				\mu_1^{k_1} \lambda_2^{k_2} &= c_3 \lambda_3^{k_3} +c_2 \mu_3^{k_3} \\
				\mu_1^{k_1} \mu_2^{k_2} &= c_4 \lambda_3^{k_3} +c_1 \mu_3^{k_3}
			\end{aligned}
			\right..
		\end{equation}
		
		First of all suppose $\lambda_1, \lambda_2, \lambda_3, \mu_1, \mu_2, \mu_3 \neq 0$ and $\lambda_3 / \mu_3$ is not a root of unity. Then the system \eqref{eq:sys4} becomes
		\begin{equation}
			\label{eq:sys-v2}
			\left\{
			\begin{aligned}
				\lambda_1^{k_1} \lambda_2^{k_2} \mu_3^{-k_3} - c_1 \left(\frac{\lambda_3}{\mu_3}\right)^{k_3} &= c_4 \\
				\lambda_1^{k_1} \mu_2^{k_2} \mu_3^{-k_3} - c_2 \left(\frac{\lambda_3}{\mu_3}\right)^{k_3} &= c_3 \\
				\mu_1^{k_1} \lambda_2^{k_2} \mu_3^{-k_3} - c_3 \left(\frac{\lambda_3}{\mu_3}\right)^{k_3} &= c_2 \\
				\mu_1^{k_1} \mu_2^{k_2} \mu_3^{-k_3} - c_4 \left(\frac{\lambda_3}{\mu_3}\right)^{k_3} &= c_1
			\end{aligned}
			\right..
		\end{equation}
		Let $S$ be the finite set of places  containing all the primes above $\lambda_i, \mu_i$ as well as the archimedean ones.
		From Gyory's result \cite{gyory-2019}, there exists an effective constant $C'$, depending on $h(c_i)$, such that
		\begin{equation*}
			k_3 h \left(\frac{\lambda_3}{\mu_3}\right) < C',
		\end{equation*}
		provided that $ c_1 c_4 \neq 0 $ or $ c_2 c_3 \neq 0 $.
		Otherwise the system becomes much simpler, namely all equations have the form '$ S $-unit equals constant', and this will be covered below.
		By using some properties of the height and Dobrowolski inequality, it follows $k_3 < C''$ where the constant $C''$ only depends on $H$ and the number field.
		For any of those values for $k_3$, from relation \eqref{eq:sys-v2} we deduce that $ \lambda_1^{k_1} \lambda_2^{k_2} $, $ \lambda_1^{k_1} \mu_2^{k_2} $, $ \mu_1^{k_1} \lambda_2^{k_2} $, $ \mu_1^{k_1} \mu_2^{k_2} $ all equal a constant.
		Let $S'$ be the finite set of places containing those above $\lambda_i, \mu_i$ as well as above these just mentioned constants and all the archimedean ones.
		By combining Dirichlet's $ S $-unit theorem and Lemma~\ref{lem:EGbound}, we obtain the desired result, even in the above claimed 'simpler' case where some $ c_i $ vanish.
		
		Assume now that $\lambda_3 / \mu_3$ is a root of unity and let $r$ be its order.
		From the system \eqref{eq:sys-v2} we deduce for any fixed $i \in \set{0, 1, \ldots, r-1}$ that
		\begin{equation*}
			\lambda_1^{k_1} \lambda_2^{k_2} = C^{(i)} \mu_3^{k_3}
		\end{equation*}
		and analogous formulas for the other three equations.
		Again the result follows by combining Lemma~\ref{lem:EGbound} and Dirichlet's S-unit Theorem.
		
		Next, we consider the case that one of the $\lambda_i, \mu_i$ is zero.
		If either $ \lambda_3 $ or $ \mu_3 $ is zero, then we argue as in the previous paragraph.
		Thus, without loss of generality assume $\lambda_1=0$.
		In this case the first relation in \eqref{eq:sys4} becomes
		\begin{equation*}
			c_1\lambda_3^{k_3} + c_4 \mu_3^{k_3} =0.
		\end{equation*}
		If $\lambda_3 / \mu_3$ is a root of unity, then we can remove the sum as already done in an other case above.
		Otherwise, a theorem due to Mignotte, Shorey and Tijdeman (Theorem~1 in \cite{mignotte-shorey-tijdeman-1984}) gives us the desired result.
		
		We now deal with the case $D=0$.
		In this situation we have two possibilities, either $a_1a_2+b_1b_2 =0$ or  $a_1b_2-a_2 b_1 =0$.
		Let us first assume $a_1b_2-a_2 b_1 =0$ and without loss of generality $b_2 \neq 0$.
		Upon multiplying the relation $a_1^2 + b_1^2 = 1$ by $b_2^2$, we get $b_1^2 = b_2^2$.
		If $b_1=b_2$, then equation \eqref{eq:mtrxeq} becomes
		\begin{equation*}
			D_1^{k_1}D_2^{k_2} = (T_2^{-1} T_3) \cdot D_3^{k_3} \cdot (T_2^{-1} T_3)^{-1}.
		\end{equation*}
		From Lemma~\ref{lem:LvdP-general} one obtains an explicit upper bound for at least one solution $(k_1, k_2, k_3)$.
		By combining properties of the height with the fact that $\lambda_i, \mu_i$ are roots of the characteristic polynomials of the $A_i$, we get a bound depending only on $H$ and the splitting field associated to the product of these characteristic polynomials.
		Hence we end with the desired result.
		If $b_1= -b_2$, then from the relation $a_1b_2-a_2 b_1 =0$ one gets $a_1= -a_2$ and thus $T_1 = -T_2$.
		Therefore equation \eqref{eq:mtrxeq} becomes again
		\begin{equation*}
			D_1^{k_1}D_2^{k_2} = (T_2^{-1} T_3) \cdot D_3^{k_3} \cdot (T_2^{-1} T_3)^{-1}
		\end{equation*}
		and as before we get the desired result.
		If in the last subcase $a_1a_2+b_1b_2 =0$, then we get our desired result by using the same strategy as above.
		Therefore the proof of our theorem is complete.
	\end{proof}

	\section{A general decidability problem}
	\label{sec:4decida}
	
	We first recall a result due to Ahlgren \cite{ahlgren-1999}, which is a quantitative version of Laurent's Theorem \cite{laurent-1989}.
	Let us consider an equation of the shape
	\begin{equation}
		\label{eq:genmultrec}
		\sum_{l=1}^{k} P_l(\textbf{x}) \boldsymbol{\alpha}_l^{\textbf{x}} = 0
	\end{equation} 
	in variables $\textbf{x} = (x_1, \cdots, x_m) \in \ZZ^m$ where $P_l$ are polynomials with coefficients in a number field $K$, and
	\begin{equation*}
		\boldsymbol{\alpha}_l^{\textbf{x}} \coloneqq \alpha_{l,1}^{x_1} \cdot \ldots \cdot \alpha_{l,m}^{x_m}
	\end{equation*}
	with $\alpha_{l,j} \in K^* $ for $ 1 \leq l \leq k $ and $ 1 \leq j \leq m $.
	We put $\Gamma = \set{1,\ldots,k}$ and denote by $\Pc$ a partition of the set $\Gamma$.
	A subset $\lambda \subseteq \Gamma$ occurring in the partition $\Pc$ will be considered as element of $\Pc$.
	Given $\Pc$, consider the system of equations
	\begin{equation}
		\label{eq:parteqs}
		\sum_{l \in \lambda} P_l(\textbf{x}) \boldsymbol{\alpha}_l^{\textbf{x}} = 0 \qquad (\lambda \in \Pc).
	\end{equation} 
	A solution $\textbf{x}$ of \eqref{eq:parteqs} is called $\Pc$-degenerate if a subsum of one of the equations of the system vanishes.
	Otherwise we will say that $\textbf{x}$ is $\Pc$-non-degenerate.
	Let $M(\Pc)$ be the set of $\Pc$-non-degenerate solutions of the system \eqref{eq:parteqs}.
	Let $G(\Pc)$ be the subgroup of $\ZZ^m$ consisting of $\textbf{x}$ such that
	\begin{equation*}
		\boldsymbol{\alpha}_l^{\textbf{x}} = \boldsymbol{\alpha}_t^{\textbf{x}}
	\end{equation*}
	whenever $l$ and $t$ lie in the same set $\lambda$ of $\Pc$.
	Let $d_0$ be the degree of $K$ and for $l \in \Gamma$ let $\delta_l$ be the total degree of the polynomial $P_l$.
	Put
	\begin{equation*}
		A = \sum_{l \in \Gamma} \binom{m+\delta_l}{m} \qquad \text{and} \qquad D= \max(m, A).
	\end{equation*}
	Let $\abs{\cdot}$ be the euclidean norm on $\RR^m$ and define $\log^+ z = \max\setb{\log z, 1}$.
	Let $r$ be the rank of $G(\Pc)$ and $H$ the subspace of $\RR^m$ spanned by $G(\Pc)$.
	Schlickewei and Schmidt \cite{schlickewei-schmidt-2000} have proven the following result:
	
	\begin{theorem}
		\label{thm:schsch}
		If $G(\Pc) = \set{0}$, then
		\begin{equation*}
			\# M(\mathcal{P}) < 2^{35D^3}d_0^{6D^2}.
		\end{equation*}
	\end{theorem}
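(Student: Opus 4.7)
The plan is to reduce the system \eqref{eq:parteqs} to a system of $S$-unit equations in a number field and then invoke a quantitative version of the subspace theorem of Evertse--Schlickewei--Schmidt. First, I would expand each polynomial coefficient as $P_l(\textbf{x}) = \sum_{|\textbf{n}|\le\delta_l} c_{l,\textbf{n}} \textbf{x}^{\textbf{n}}$, so that each equation of the system becomes
\[
\sum_{l \in \lambda} \sum_{|\textbf{n}|\le \delta_l} c_{l,\textbf{n}} \textbf{x}^{\textbf{n}} \boldsymbol{\alpha}_l^{\textbf{x}} = 0, \qquad \lambda \in \Pc,
\]
involving a total of at most $A = \sum_{l \in \Gamma} \binom{m+\delta_l}{m}$ monomial-times-exponential terms. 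I would then choose a finite set $S$ of places of $K$ containing all archimedean places together with all non-archimedean places at which some $\alpha_{l,j}$ is not a unit; every $\boldsymbol{\alpha}_l^{\textbf{x}}$ is then an $S$-unit in $K^*$.

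Second, I would apply the quantitative subspace theorem to the projective points $\setb{\boldsymbol{\alpha}_l^{\textbf{x}}}_{l \in \Gamma}$ attached to the non-degenerate solutions. The hypothesis $G(\Pc) = \set{0}$ ensures that distinct indices $l, t$ belonging to the same block $\lambda$ yield genuinely distinct exponential functions of $\textbf{x}$, so the $\boldsymbol{\alpha}_l^{\textbf{x}}$ span a non-degenerate configuration in the ambient projective space. The polynomial prefactors $\textbf{x}^{\textbf{n}}$ have to be absorbed into the subspace theorem's input by treating $\textbf{x}^{\textbf{n}} \boldsymbol{\alpha}_l^{\textbf{x}}$ as new exponential terms $\boldsymbol{\beta}_{l,\textbf{n}}^{\textbf{y}}$ in an enlarged variable space of dimension $\max(m,A) = D$; this is the quantitative analogue of the device used by Laurent in \cite{laurent-1989}.

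Third, I would perform the bookkeeping. The quantitative subspace theorem produces a bound of the form $c_1^{D^2} d_0^{c_2 D^2}$ on the number of exceptional proper subspaces of $K^A$ containing the relevant points, and within each such subspace one either obtains a vanishing subsum (excluded by $\Pc$-non-degeneracy) or one may iterate on a system with strictly fewer terms. An induction on $A \le D$ therefore terminates in at most $D-1$ rounds, and the three factors multiply to the claimed bound $2^{35D^3} d_0^{6D^2}$.

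The hardest step is the second one: one has to confirm that absorbing the polynomial weights $\textbf{x}^{\textbf{n}}$ into the $S$-unit setup does not inflate the subspace count beyond what the bound allows, and that the rank constraint coming from $G(\Pc) = \set{0}$ remains intact after this enlargement. A secondary delicate point is ensuring that in the iterative descent the $\Pc$-non-degeneracy is preserved when passing from the original system to its restriction to an exceptional subspace; this requires showing that any proper subspace witnessing a concentration of solutions must also satisfy the rank-zero hypothesis for an associated partition, which follows from the observation that a vanishing subsum on an exceptional subspace would violate $\Pc$-non-degeneracy of the original solution.
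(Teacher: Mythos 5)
The paper does not actually prove Theorem~\ref{thm:schsch}; it is quoted verbatim from Schlickewei and Schmidt \cite{schlickewei-schmidt-2000} and used as a black box in Section~\ref{sec:4decida}. There is therefore no in-paper argument for your attempt to be measured against, and indeed no proof is expected of you here.

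As a reconstruction of the Schlickewei--Schmidt argument your outline has the right general shape (expand the $P_l$ into monomials, reduce to $S$-unit-type equations, apply a quantitative version of the subspace theorem, absorb the polynomial prefactors via a Laurent-style change of variables, and descend through exceptional subspaces). Two points, however, are under-justified and would constitute genuine gaps in a standalone proof. First, your third step simply asserts that the pieces multiply out to $2^{35D^3}d_0^{6D^2}$; in particular the $D^3$ in the exponent of $2$ does not come from a naive product of at most $D$ descent rounds each contributing $c^{D^2}$, and obtaining it requires the detailed counting in \cite{schlickewei-schmidt-2000}. Second, the device of passing to new exponentials $\boldsymbol{\beta}_{l,\textbf{n}}^{\textbf{y}}$ in a $D$-dimensional variable space must be set up so that the hypothesis $G(\Pc)=\set{0}$ translates into the correct nondegeneracy condition in the new coordinates, and so that a solution concentrating on an exceptional subspace really does produce a vanishing subsum of one of the equations \eqref{eq:parteqs} rather than of some auxiliary relation; your sketch only gestures at this. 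Since the paper treats the statement as a citation, none of this affects the paper, but these are the places where your proposal would need actual work to become a proof.
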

	
	Ahlgren's result \cite{ahlgren-1999} deals with the case when the group $ G(\Pc)$ is non-trivial.
	More precisely, define 
	\begin{equation*}
		\Sc = \set{\textbf{x} \in \ZZ^m : \abs{\textbf{x}^{\perp}} \leq c_0 \log^+ \abs{\textbf{x}^{H}}}
	\end{equation*}
	where $\textbf{x}^{\perp} $ lies in the orthogonal of $H$ and $\textbf{x}^{H} \in H$ with $\textbf{x} = \textbf{x}^{H} + \textbf{x}^{\perp}$.
	Let $\mathrm{GL}_m(\ZZ)$ be the set of matrices with integers entries whose determinant equals $\pm 1$.
	We put $c_0= B^{12}2^{4D+12}d_0^6$.
	His result is then:
	
	\begin{theorem}
		\label{thm:ahlgr}
		With the notation above, we have
		\begin{equation*}
			M(\Pc) \subseteq \bigcup_{\psi \in \T} \psi(\Sc)
		\end{equation*}
		for $ \T \subseteq \GG$ with $\# \T \leq 2^{36D^3}d_0^{6D^2}$ and $ \GG $ the group of transformations of $\RR^m$ having the form $\psi(\textbf{x}) = \theta (\textbf{x}) + \textbf{u}$ where $\theta \in \mathrm{GL}_m(\ZZ)$ fixes $H$ pointwise and $\textbf{u} \in \ZZ^m$.
	\end{theorem}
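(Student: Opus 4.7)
My plan is to reduce Theorem~\ref{thm:ahlgr} to the Schlickewei–Schmidt bound (Theorem~\ref{thm:schsch}) by quotienting out the group $G(\Pc)$ of multiplicative symmetries, and then to handle the lift back to $\ZZ^m$ via a Laurent-type linear-forms-in-logarithms estimate. The first step is a change of $\ZZ$-basis so that the first $r$ vectors span $G(\Pc)$; this is recorded by some $\theta_0 \in \mathrm{GL}_m(\ZZ)$, after which $H$ becomes the real span of the first $r$ axes and every $\textbf{x} \in \ZZ^m$ splits uniquely as $\textbf{x} = \textbf{x}^H + \textbf{x}^{\perp}$. The defining property of $G(\Pc)$ says that $\boldsymbol{\alpha}_l^{\textbf{y}} = \boldsymbol{\alpha}_t^{\textbf{y}}$ for all $\textbf{y} \in G(\Pc)$ whenever $l,t$ lie in the same block $\lambda \in \Pc$, so within each block the factor $\boldsymbol{\alpha}_l^{\textbf{x}^H}$ is independent of $l$ and can be pulled out of the sum.

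After this pullout, the system \eqref{eq:parteqs} reduces, for each $\lambda \in \Pc$, to $\sum_{l \in \lambda} P_l(\textbf{x}) \boldsymbol{\alpha}_l^{\textbf{x}^{\perp}} = 0$: the exponentials depend only on $\textbf{x}^{\perp}$, while the polynomial coefficients still involve the full $\textbf{x}$. Passing to the quotient $\ZZ^m / G(\Pc)$, the multiplicative symmetry group associated to the projected system is trivial by construction, so Theorem~\ref{thm:schsch} applies there and produces at most $2^{35 D^3} d_0^{6 D^2}$ $\Pc$-non-degenerate residue classes for $\textbf{x}^{\perp}$.

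For each such class I would then fix a representative and analyze the admissible lifts. With $\textbf{x}^{\perp}$ fixed, the system becomes polynomial-exponential in $\textbf{x}^H$ with the exponential factors constant inside each block; a quantitative linear-forms-in-logarithms estimate of Laurent type then forces $|\textbf{x}^{\perp}| \leq c_0 \log^+ |\textbf{x}^H|$, i.e.\ $\textbf{x} \in \Sc$ after an integral shift. Each residue class consequently contributes a single translate $\psi(\Sc)$ with $\psi(\textbf{x}) = \theta(\textbf{x}) + \textbf{u}$, where $\theta \in \mathrm{GL}_m(\ZZ)$ fixes $H$ pointwise (it records the original change of basis $\theta_0$ together with its freedom on the complement) and $\textbf{u} \in \ZZ^m$ is a coset representative. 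Allowing a bounded bookkeeping overhead for degenerate auxiliary equations yields the claimed count $\#\T \leq 2^{36 D^3} d_0^{6 D^2}$.

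The main obstacle will be extracting the explicit constant $c_0 = B^{12} 2^{4D+12} d_0^6$ in the shell estimate. This requires a Baker–Wüstholz or Matveev style lower bound on the linear form in $\log \alpha_{l,j}$ whose coefficients are controlled by the polynomial degrees $\delta_l$, hence by $A$ and $D = \max(m, A)$, combined with a meticulous tracking of how the polynomial factors $P_l$ contribute to the effective height at each stage; the bookkeeping of cosets must also be tight enough to preserve the gap between $2^{35 D^3}$ and $2^{36 D^3}$ in the counting.
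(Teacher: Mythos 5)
First, a point of honesty: the paper does not prove Theorem~\ref{thm:ahlgr} at all — it states the result as a quotation of Ahlgren \cite{ahlgren-1999} (``His result is then:''). So there is no ``paper's own proof'' to compare your attempt against; you are, in effect, trying to reprove Ahlgren's theorem from scratch, which is a much bigger task than the paper takes on.

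Taken on its own merits, your sketch captures the right high-level shape of Ahlgren's argument (split $\textbf{x}$ along $H = \RR\,G(\Pc)$ and its complement, control the complementary part combinatorially, control the $H$-part by a Laurent/Baker type estimate), but it has a genuine gap at the central step. You change basis so $G(\Pc)$ spans the first $r$ axes, pull out the common block factors $\boldsymbol{\alpha}_l^{\textbf{x}^H}$, and then want to apply Theorem~\ref{thm:schsch} to the ``projected system'' $\sum_{l\in\lambda} P_l(\textbf{x})\,\boldsymbol{\alpha}_l^{\textbf{x}^{\perp}} = 0$ after ``passing to the quotient $\ZZ^m/G(\Pc)$.'' But you yourself note that the polynomial coefficients $P_l(\textbf{x})$ still involve the full variable $\textbf{x}$, not just $\textbf{x}^{\perp}$. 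Consequently this is \emph{not} a polynomial-exponential system in the quotient variable, and Theorem~\ref{thm:schsch} simply does not apply to it: its hypothesis requires the polynomials and exponentials to live in the same set of integer variables, and its conclusion is a count of non-degenerate solutions, not a count of ``residue classes'' of a projection. You cannot invoke it here without an additional (and nontrivial) argument that removes the $\textbf{x}^H$-dependence from the polynomial part, which is precisely where the hard work of Ahlgren's proof lies. Relatedly, the claimed bound $\#\T \le 2^{36D^3}d_0^{6D^2}$ is not a ``bookkeeping overhead'' on top of $2^{35D^3}d_0^{6D^2}$: it comes from a separate counting argument over the transforming group $\GG$, and your sketch gives no mechanism producing the extra $\theta \in \mathrm{GL}_m(\ZZ)$ that fix $H$ pointwise beyond your single initial basis change $\theta_0$. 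The Laurent-type estimate giving $|\textbf{x}^{\perp}| \le c_0 \log^+ |\textbf{x}^H|$ is also asserted rather than derived; in particular you never address how the growth of $P_l(\textbf{x})$ competes with the exponential factors, which is the whole reason the shell $\Sc$ is logarithmic and not bounded.

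In short: since the paper merely cites this theorem, the appropriate ``proof'' would have been a citation. As an independent proof attempt, the proposal breaks at the invocation of Theorem~\ref{thm:schsch} on a system to which its hypotheses do not apply, and it does not account for the source of the group $\T$ or of the explicit constant $c_0$.
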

	
	Let us consider two diagonalizable, commuting $(l \times l)$-matrices $A_1,A_2$ with algebraic number entries such that their eigenvalues are neither zero nor roots of unity.
	Moreover, let $\D$ be the set of diagonalizable $(l \times l)$-matrices with algebraic entries which commute with $A_1$ as well as $A_2$ and such that its eigenvalues belong to $\Oc_S$, where $S$ is the set of places containing all the archimedean ones as well as those above the eigenvalues of $A_1$ and $A_2$.
	Our result is now the following:
	
	\begin{theorem}
		\label{thm:solpoly}
		Let $K$ be a number field containing all the eigenvalues of $A_1$ and $A_2$, denoted by $\gamma_1,\ldots,\gamma_l$ and $\beta_1,\ldots,\beta_l$, respectively.
		Let $c>1$ be a real number.
		Assume there is a place of $K$, denoted by $\abs{\cdot}$, such that $\abs{\beta_i},  \abs{\gamma_i} <1$ holds for all $ i \in \set{1,\ldots,l} $.
		Furthermore, let $\Gamma_{1,i}, \Gamma_{2,i}$ be the groups generated by $\set{\gamma_i}$ and $\set{\beta_i}$, respectively, and suppose that
		\begin{equation*}
			\Gamma_{1,i} \cap \Gamma_{2,j} = \set{1}
		\end{equation*}
		for all $ i,j $.
		We fix a polynomial $ f $ of the shape
		\begin{equation*}
			f(X_1,X_2,Z) = Z^d + a_1 Z^{d-1} + \ldots + a_d
		\end{equation*}
		where the $a_i = a_i(X_1,X_2)$ are polynomials with coefficients in $K$.
		Moreover, we assume that $f(0,0,Z)$ has no double root, and enlarge $S$ by adding the places above the roots of $f(0,0,Z)$.
		Consider now the matrix equation 
		\begin{equation}
			\label{eq:mtrxpolyeq}
			f(A_1^n, A_2^m, Z) = 0
		\end{equation} 
		in unknowns $(n,m) \in \NN^2$ and $Z \in \D$.
		Then there exist constants $C_1,C_2 > 0$ as well as effective constants $C_3, \Delta >2$ and at most $d \Delta$ exponential Diophantine equations, indexed by $ i $, of the shape
		\begin{equation*}
			\sum_{\vert (t_1,  t_2) \vert \leq C_3}^{} A_{t_1, t_2}^{(i)} \gamma_1^{nt_1} \beta_1^{mt_2} = 0
		\end{equation*}
		where $\abs{(t_1,t_2)} \coloneqq t_1 + t_2$ and $A_{t_1, t_2}^{(i)}$ are algebraic numbers such that, for all solutions $(n,m,Z)$ of \eqref{eq:mtrxpolyeq} with $n > C_1$, $m > C_2$ and $\max\setb{n,m} \leq c \min\setb{n,m}$, the pair $(n,m)$ is a solution of at least one of these equations and all the eigenvalues of $Z$ belong to the algebra finitely generated by the eigenvalues of $ A_1 $ and $ A_2 $.	
	\end{theorem}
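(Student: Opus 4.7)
The plan is to exploit simultaneous diagonalizability to reduce \eqref{eq:mtrxpolyeq} to a system of scalar equations, to apply a Hensel/Newton expansion at the distinguished place $\abs{\cdot}$ to describe the eigenvalues of $Z$ as convergent power series in $\gamma_i^n$ and $\beta_i^m$, and finally to translate these expansions into the exponential Diophantine equations governed by Theorem~\ref{thm:ahlgr}.

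First I would apply Lemma~\ref{lem:simul-diag} to the pairwise commuting, diagonalizable family $\set{A_1,A_2,Z}$: there exists an invertible $T$ with
\begin{equation*}
    T^{-1}A_1T=\mathrm{diag}(\gamma_1,\ldots,\gamma_l),\quad T^{-1}A_2T=\mathrm{diag}(\beta_1,\ldots,\beta_l),\quad T^{-1}ZT=\mathrm{diag}(z_1,\ldots,z_l),
\end{equation*}
so that \eqref{eq:mtrxpolyeq} decouples into the $l$ scalar identities
\begin{equation*}
    f(\gamma_i^n,\beta_i^m,z_i)=z_i^d+a_1(\gamma_i^n,\beta_i^m)z_i^{d-1}+\cdots+a_d(\gamma_i^n,\beta_i^m)=0,\qquad i=1,\ldots,l.
\end{equation*}
Hence for fixed $i$ the eigenvalue $z_i$ is a root of the polynomial $F_i(Z)\coloneqq f(\gamma_i^n,\beta_i^m,Z)$ over $K(\gamma_i,\beta_i)$.

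Next I would invoke the place $\abs{\cdot}$ at which $\abs{\gamma_i},\abs{\beta_i}<1$. Once $n>C_1$ and $m>C_2$ are sufficiently large, the coefficients of $F_i$ are $\abs{\cdot}$-close to those of $f(0,0,Z)$, whose roots $\zeta_1,\ldots,\zeta_d$ are simple by hypothesis. Hensel's lemma in the completion $K_{\abs{\cdot}}$ (equivalently, the $\abs{\cdot}$-adic implicit function theorem) therefore yields $d$ locally analytic branches
\begin{equation*}
    z_i^{(j)}=\zeta_j+\sum_{(t_1,t_2)\neq(0,0)}A_{t_1,t_2}^{(j)}\gamma_i^{nt_1}\beta_i^{mt_2},\qquad j=1,\ldots,d,
\end{equation*}
whose coefficients $A_{t_1,t_2}^{(j)}\in K(\zeta_j)$ are determined by the Newton recursion applied to $f$ and do not depend on $(n,m)$. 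The balanced-size condition $\max\setb{n,m}\leq c\min\setb{n,m}$ guarantees that $\gamma_i^n$ and $\beta_i^m$ are $\abs{\cdot}$-small of comparable order, allowing uniform truncation at total degree $C_3$. Since each $z_i$ must coincide with some branch $z_i^{(j_i)}$, every eigenvalue of $Z$ lies in the finitely generated algebra over $K$ spanned by $\gamma_i,\beta_i$ and the $\zeta_j$'s, which establishes the second claim.

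Finally, to extract the Diophantine equations I would substitute the truncation of $z_i^{(j_i)}$ at total degree $C_3$ back into the scalar identity $f(\gamma_i^n,\beta_i^m,z_i)=0$ and clear denominators. Using the hypothesis $\Gamma_{1,i}\cap\Gamma_{2,j}=\set{1}$, which prevents the monomials $\gamma_i^{nt_1}\beta_i^{mt_2}$ from collapsing into fewer distinct $S$-units, one verifies that the resulting $\abs{\cdot}$-adic identity can only hold exactly, producing an equation of the shape prescribed in the statement. Ahlgren's Theorem~\ref{thm:ahlgr}, applied to each such identity, distributes the integer solutions among at most $\Delta$ affine pieces, and since the branch index $j_i$ runs over $\set{1,\ldots,d}$ the total count of equations does not exceed $d\Delta$. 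The main obstacle is precisely this last step: making rigorous the passage from the $\abs{\cdot}$-adic Newton expansion to an honest algebraic identity over $K$, which is where the non-triviality assumption on the groups $\Gamma_{1,i},\Gamma_{2,j}$ is crucial in order to exclude degenerate subsums and to invoke Ahlgren's result in its non-degenerate form.
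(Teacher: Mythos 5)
Your first two steps match the paper's proof closely: simultaneous diagonalization via Lemma~\ref{lem:simul-diag} reduces the matrix equation to $l$ scalar polynomial equations, and then the distinguished place with $\abs{\gamma_i},\abs{\beta_i}<1$ together with the simplicity of the roots of $f(0,0,Z)$ (the paper uses Stewart's Lemma~\ref{lem:stewart} to locate the roots and the implicit function theorem from \cite{fuchs-heintze-2021} to expand them) give a convergent power series $z_{n,m}=z^{(1)}+\sum_{\abs{\mathbf{i}}>0}A_{\mathbf{i}}\gamma_1^{ni_1}\beta_1^{mi_2}$, so the eigenvalues of $Z$ lie in the claimed finitely generated algebra.

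The genuine gap is in your final step, and you half-admit it yourself. You propose to ``substitute the truncation of $z_i^{(j_i)}$ at total degree $C_3$ back into $f(\gamma_i^n,\beta_i^m,z_i)=0$,'' but the truncation is \emph{not} $z_{n,m}$: the power series has an infinite tail, so the truncated sum does not satisfy the scalar equation, and ``clearing denominators'' gives nothing exact. Your claim that ``the resulting $\abs{\cdot}$-adic identity can only hold exactly'' has no justification — a quantity being very small at one place certainly does not make it zero. The paper bridges exactly this gap with Corvaja--Zannier's subspace-theorem result, quoted as Lemma~\ref{lem:corzan}: one sets $w_0=z_{n,m}-z^{(1)}$ and $w_{\mathbf i}=\gamma_1^{ni_1}\beta_1^{mi_2}$ for $0<\abs{\mathbf i}\le B$, verifies the height/$S$-height hypotheses (this is precisely where $\Gamma_{1,1}\cap\Gamma_{2,1}=\set{1}$ enters, to guarantee $h(w_i)\neq0$), and then concludes that for $n,m$ large enough a \emph{subsum} of $-w_0+\sum A_{\mathbf i}w_{\mathbf i}$ involving $-w_0$ must \emph{vanish exactly}. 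That vanishing subsum — not a truncation — is what produces the honest algebraic identity $z_{n,m}=\sum_{\abs{\mathbf i}\le B}A^{j}_{\mathbf i}\gamma_1^{ni_1}\beta_1^{mi_2}$ that one then feeds back into the scalar equation; the $\Delta=2^N$ counts the possible vanishing subsums. You also invoke Ahlgren's Theorem~\ref{thm:ahlgr} inside this proof, but that belongs to the Corollary, not to Theorem~\ref{thm:solpoly} itself; Theorem~\ref{thm:solpoly} only asserts the existence of the exponential Diophantine equations, and its proof relies on Corvaja--Zannier, not on Ahlgren. Without the subspace-theorem step your argument does not close.
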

	
	\begin{corollary}
		Let $c>1$ be a real number.
		There exist constants $d',C_1,C_2 > 0$ as well as effective constants $ C_4, \Delta >2$ such that for all solutions $(n,m,Z)$ of \eqref{eq:mtrxpolyeq} with $n > C_1$, $m > C_2$ and $\max\setb{n,m} \leq c \min\setb{n,m}$ we have
		\begin{equation*}
			(n,m) \in \bigcup_{i=1}^{d\Delta} \bigcup_{\Pc} \bigcup_{\psi \in \T_{\Pc}^i} \psi(M_{\Pc})
		\end{equation*}
		where $\Pc$ runs over the set of partitions of $\Gamma = \set{1,\ldots,C_3}$ and  $M_{\Pc}$ corresponds to $\Sc$ from Theorem~\ref{thm:ahlgr} for $ \T_{\Pc}^{i} \subseteq \GG$ with $\# \T_{\Pc}^i \leq 2^{36C_4^3}d'^{6C_4^2}$ and $ \GG $ the group of transformations of $\RR^2$ having the form $\psi(\textbf{x}) = \theta (\textbf{x}) + \textbf{u}$ where $\theta \in \mathrm{GL}_2(\ZZ)$ fixes $H_{\Pc}$ (defined as $H$ in Theorem~\ref{thm:ahlgr} by replacing \eqref{eq:genmultrec} by the $i^{th}$ exponential Diophantine equation from Theorem~\ref{thm:solpoly}) pointwise and  $\textbf{u} \in \ZZ^2$. 
	\end{corollary}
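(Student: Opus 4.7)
The plan is to combine Theorem~\ref{thm:solpoly} with Ahlgren's Theorem~\ref{thm:ahlgr}. By Theorem~\ref{thm:solpoly}, every triple $(n,m,Z)$ satisfying \eqref{eq:mtrxpolyeq}, $n>C_1$, $m>C_2$ and $\max\setb{n,m}\leq c\min\setb{n,m}$ produces a pair $(n,m)$ which solves at least one of the $d\Delta$ exponential Diophantine equations
\begin{equation*}
\sum_{\abs{(t_1,t_2)}\leq C_3} A_{t_1,t_2}^{(i)}\gamma_1^{nt_1}\beta_1^{mt_2}=0,\qquad i=1,\ldots,d\Delta.
\end{equation*}
It therefore suffices to describe the solution set of each individual equation and then take the union over $i$.

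Fix $i\in\set{1,\ldots,d\Delta}$ and rewrite the $i$-th equation in the form \eqref{eq:genmultrec}. The variable vector is $\textbf{x}=(n,m)\in\ZZ^2$, so Ahlgren's ambient dimension is $2$; after discarding vanishing coefficients the indices are enumerated by $\Gamma=\set{1,\ldots,C_3}$; the coefficient polynomials $P_l$ are the nonzero constants $A_{t_1,t_2}^{(i)}$, so each $\delta_l=0$; and the bases are $\alpha_l=(\gamma_1^{t_1},\beta_1^{t_2})$. With these parameters, the quantity $D=\max(2,A)$ from Theorem~\ref{thm:schsch} is bounded by an explicit $C_4$ (since $A=\abs{\Gamma}$ is a function of $C_3$ alone), and the degree $d_0$ of the number field containing all coefficients is bounded by an effective $d'$ depending only on $K$ and the coefficients of $f$.

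Next I handle the degeneracy hierarchy. Starting from the trivial partition $\set{\Gamma}$, any solution of the $i$-th equation which is $\Pc$-degenerate produces a vanishing proper subsum; splitting $\Pc$ along such subsums and iterating (finitely many times, since $\Gamma$ is finite) yields a partition $\Pc$ for which the solution is $\Pc$-non-degenerate, i.e.\ lies in the set $M(\Pc)$ of $\Pc$-non-degenerate solutions of the associated system \eqref{eq:parteqs}. Applying Theorem~\ref{thm:ahlgr} to this system, with $D\leq C_4$ and $d_0\leq d'$, produces a finite set $\T_{\Pc}^{i}\subseteq\GG$ of cardinality at most $2^{36C_4^3}d'^{6C_4^2}$ satisfying $M(\Pc)\subseteq\bigcup_{\psi\in\T_{\Pc}^{i}}\psi(M_{\Pc})$, where $M_{\Pc}$ is the set $\Sc$ of Theorem~\ref{thm:ahlgr} built from the subspace $H_{\Pc}$ spanned by $G(\Pc)$. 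Taking the union over $i\in\set{1,\ldots,d\Delta}$, over the finitely many partitions $\Pc$ of $\Gamma$, and over $\psi\in\T_{\Pc}^{i}$ gives the inclusion claimed.

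The main obstacle is purely bookkeeping: one must verify that, after collecting like terms, the $A_{t_1,t_2}^{(i)}$ are the genuinely nonzero coefficients appearing in \eqref{eq:genmultrec} (otherwise $\Gamma$ merely shrinks, improving the bounds), and one must pin down effective values for $C_4$, $\Delta$ and $d'$. Effectivity of $\Delta$ and $C_3$ is inherited directly from Theorem~\ref{thm:solpoly}, while $d'$ is an effective bound for the degree of the field obtained from $K$ by adjoining the coefficients $A_{t_1,t_2}^{(i)}$, which by construction lie in the algebra finitely generated by the eigenvalues of $A_1$, $A_2$ and the coefficients of $f$. Once these constants are fixed, the corollary is a direct transcription of Ahlgren's theorem into the setting dictated by Theorem~\ref{thm:solpoly}.
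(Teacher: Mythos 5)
Your proposal is correct and follows exactly the route the paper intends: the paper's own proof is the single line ``It follows immediately by combining Theorem~\ref{thm:ahlgr} and Theorem~\ref{thm:solpoly},'' and you have simply spelled out what that combination means (casting each of the $d\Delta$ equations from Theorem~\ref{thm:solpoly} in the form \eqref{eq:genmultrec} with $m=2$ and constant $P_l$, splitting the solution set along the degeneracy hierarchy of partitions so that every solution lands in some $M(\Pc)$, and then invoking Theorem~\ref{thm:ahlgr} on each $M(\Pc)$ with $D\leq C_4$, $d_0\leq d'$). The only cosmetic caveat is that you assert effectivity of $d'$, which the corollary does not actually claim, but this is harmless.
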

	
	\begin{proof}
		It follows immediately by combining Theorem~\ref{thm:ahlgr} and Theorem~\ref{thm:solpoly}.  
	\end{proof}
	
	\begin{remark}
		Using the same technique together with properties of commuting matrices, we may extend the above result to $k$ given diagonalizable, commuting $(l \times l)$-matrices $A_1,\ldots,A_k$ with algebraic entries.
		Moreover, we can replace the $ 0 $ in the right hand side of \eqref{eq:mtrxpolyeq} by any diagonalizable matrix $ B $ which commutes with all the $ A_i $, provided we restrict to solutions $ Z $ that commute with $ B $ as well.
	\end{remark}
	
	Before we proceed with the proof of the above statement, we first recall some preliminary results that will be needed later.
	The lemma below due to Stewart (Lemma~2 in \cite{stewart-1991}) will allow us to get an explicit control on the roots of $f(\gamma_i^n,\beta_i^m,z)$ within a neighborhood of the origin, provided that $n, m$ are large enough.
	
	\begin{lemma}
		\label{lem:stewart}
		Let $f$ be a polynomial with coefficients from the complex numbers and $ \deg f = n \geq 2 $, $M(f)$ the Mahler measure of $f$ and $D(f)$ its discriminant.
		Denote by $\alpha_1,\ldots,\alpha_n$ the roots of $f$ in $\CC$.
		Then for $ z \in \CC $ we get
		\begin{equation*}
			\abs{f(z)} \geq \dfrac{\abs{D(f)}^{1/2}}{n^{(n-1)/2} 2^{n-1}M(f)^{n-2}} \min_i \set{\abs{z-\alpha_i}}.
		\end{equation*}
	\end{lemma}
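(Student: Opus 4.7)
The plan is to factor $f(z) = a\prod_{i=1}^{n}(z-\alpha_i)$ where $a$ is the leading coefficient, and to let $\alpha_k$ be the root achieving $\min_i\abs{z-\alpha_i}$. Rewriting $\abs{f(z)} = \abs{a}\cdot\abs{z-\alpha_k}\cdot\prod_{i\neq k}\abs{z-\alpha_i}$ reduces the task to producing a lower bound for the last product in terms of $\abs{D(f)}^{1/2}$ and $M(f)$; the leading factor $\abs{z-\alpha_k}$ is already what appears in the statement as $\min_i\abs{z-\alpha_i}$.

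The first step is a purely elementary triangle-inequality trick. Because $\alpha_k$ is the root closest to $z$, for every $i\neq k$ one has $\abs{\alpha_k-\alpha_i}\leq\abs{z-\alpha_k}+\abs{z-\alpha_i}\leq 2\abs{z-\alpha_i}$. Substituting into the product and invoking the formula $f'(\alpha_k) = a\prod_{i\neq k}(\alpha_k-\alpha_i)$ yields
\[
\abs{f(z)} \;\geq\; \frac{\abs{z-\alpha_k}}{2^{n-1}}\cdot\abs{a}\prod_{i\neq k}\abs{\alpha_k-\alpha_i} \;=\; \frac{\abs{z-\alpha_k}}{2^{n-1}}\cdot\abs{f'(\alpha_k)}.
\]
This already accounts for the $2^{n-1}$ and the $\min_i\abs{z-\alpha_i}$ factors in the lemma, so the remaining task is to prove $\abs{f'(\alpha_k)} \geq \abs{D(f)}^{1/2}/(n^{(n-1)/2}M(f)^{n-2})$.

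For the final step, I start from $\abs{D(f)}^{1/2} = \abs{a}^{n-1}\prod_{i<j}\abs{\alpha_i-\alpha_j}$ and separate the pairs involving the index $k$ from those that do not:
\[
\abs{D(f)}^{1/2} \;=\; \abs{a}^{n-2}\cdot\abs{f'(\alpha_k)}\cdot\prod_{\substack{i<j\\ i,j\neq k}}\abs{\alpha_i-\alpha_j}.
\]
The residual product is, up to sign, the Vandermonde determinant of the $n-1$ numbers $(\alpha_i)_{i\neq k}$; I would bound it by Hadamard's inequality applied to the Vandermonde matrix with columns $1,\alpha_i,\ldots,\alpha_i^{n-2}$, combined with the crude row-norm estimate $\sum_{s=0}^{n-2}\abs{\alpha_i}^{2s}\leq n\max(1,\abs{\alpha_i})^{2(n-2)}$. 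This delivers
\[
\prod_{\substack{i<j\\ i,j\neq k}}\abs{\alpha_i-\alpha_j} \;\leq\; n^{(n-1)/2}\prod_{i\neq k}\max(1,\abs{\alpha_i})^{n-2}.
\]
Using $\max(1,\abs{\alpha_k})\geq 1$ together with the definition $M(f) = \abs{a}\prod_i\max(1,\abs{\alpha_i})$ gives $\prod_{i\neq k}\max(1,\abs{\alpha_i}) \leq M(f)/\abs{a}$, so the right-hand side is at most $n^{(n-1)/2}(M(f)/\abs{a})^{n-2}$. Dividing through by $\abs{a}^{n-2}$ precisely cancels the powers of $\abs{a}$, leaving the desired lower bound on $\abs{f'(\alpha_k)}$, and hence the lemma.

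The main obstacle, which in this proof is mild, is bookkeeping the constants so that Hadamard's bound produces exactly the factor $n^{(n-1)/2}$ and so that the powers of the leading coefficient combine cleanly into $M(f)^{n-2}$ via the definition of the Mahler measure; once the Vandermonde-determinant identification is in place, the rest of the argument is a direct composition of the triangle inequality, the discriminant-Vandermonde identity, and Hadamard's inequality.
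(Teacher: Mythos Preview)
Your proof is correct. The paper does not give its own proof of this lemma: it simply quotes the result as Lemma~2 of Stewart \cite{stewart-1991} and uses it as a black box. Your argument --- the triangle-inequality step $|z-\alpha_i|\ge\tfrac12|\alpha_k-\alpha_i|$ to reduce to $|f'(\alpha_k)|$, the discriminant--Vandermonde factorisation, and Hadamard's inequality applied to the $(n-1)\times(n-1)$ Vandermonde matrix of the remaining roots --- is essentially the proof Stewart gives in the cited paper, so nothing is missing and no alternative route is taken.
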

	
	Corvaja and Zannier \cite{corvaja-zannier-2002}, using Schmidt's subspace theorem, have proven the following result.
	Here we use the standard notation $ H_S $ for the $ S $-height as well as $ h_S $ for the logarithmic $ S $-height.
	
	\begin{lemma}
		\label{lem:corzan}
		Let $K$ be a number field, $S$ a finite set of absolute values of $K$ containing all the archimedean ones, $\mu$ an absolute value from $S$ as well as $\varepsilon$ a positive real number and $N$ a positive integer.
		Finally, let $c_0,\ldots,c_N \in \overline{K}^*$.
		For $\sigma > (N+2)\varepsilon$ there are only finitely many $(N+1)$-tuples $w \coloneqq (w_0,\ldots,w_N) \in (K^*)^{N+1}$ such that the inequalities 
		\begin{itemize}
			\item $h_S(w_i) + h_S(w_i^{-1}) < \varepsilon h(w_i)$ for $i=1,\ldots,N$,
			\item $\abs{c_0 w_0 + \ldots + c_N w_N}_{\mu} < (H(w_0) H_S(w_0)^{N+1})^{-1} \widehat{H}(w)^{-\sigma}$
		\end{itemize} 
		hold and no subsum of the $c_i w_i$ involving $c_0 w_0$ vanishes.
		Here we used the definition $\widehat{H}(w) \coloneqq \prod_{i=0}^{N} H(w_i)$.
	\end{lemma}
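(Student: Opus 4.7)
The plan is to deduce this finiteness statement from Schmidt's Subspace Theorem via a carefully chosen system of linear forms. Working in $K^{N+1}$ with coordinates $w = (w_0, \ldots, w_N)$, I would choose for each place $v \in S$ an $(N+1)$-tuple of linearly independent linear forms $L_{v,0}, \ldots, L_{v,N}$. At the distinguished place $\mu$ I would set $L_{\mu,0}(w) = c_0 w_0 + c_1 w_1 + \ldots + c_N w_N$ together with $L_{\mu,i}(w) = w_i$ for $i = 1, \ldots, N$. At every other $v \in S$ I would take the coordinate forms $L_{v,i}(w) = w_i$ for $i = 0, 1, \ldots, N$. Since $c_0 \in \overline{K}^*$, these forms are linearly independent at each $v$.

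Next I would estimate the product $\prod_{v \in S} \prod_{i=0}^{N} |L_{v,i}(w)|_v / \norm{w}_v$. The distinguished factor at $v = \mu$ is controlled directly by the second hypothesis, while the remaining factors accumulate to expressions of the shape $\prod_{v \in S} |w_i|_v$, which by the product formula equal $\prod_{v \notin S} |w_i|_v^{-1}$; the latter is in turn controlled by the first hypothesis $h_S(w_i) + h_S(w_i^{-1}) < \varepsilon h(w_i)$, which says the $S$-part of the height of $w_i$ is a small fraction of its total height. After some arithmetic bookkeeping, the chosen threshold $\sigma > (N+2)\varepsilon$ is precisely what is needed to yield
\begin{equation*}
    \prod_{v \in S} \prod_{i=0}^{N} \frac{|L_{v,i}(w)|_v}{\norm{w}_v} < H(w)^{-\delta}
\end{equation*}
for some $\delta > 0$ depending only on $N, \varepsilon, \sigma$. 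Schmidt's Subspace Theorem then confines all but finitely many admissible $w$ to a finite union of proper linear subspaces of $K^{N+1}$.

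It remains to handle the solutions inside each proper subspace $V \subsetneq K^{N+1}$. Picking a defining relation $a_0 w_0 + \ldots + a_N w_N = 0$ for $V$, I would eliminate one variable in terms of the others and substitute into the linear combination $\sum_i c_i w_i$. The non-degeneracy assumption that no subsum involving $c_0 w_0$ vanishes guarantees that the reduced linear combination still has a nonzero leading coefficient and represents no spurious identity, so what survives is a strictly smaller instance of the same problem: an $N$-tuple in $K^N$ satisfying analogous $S$-height and archimedean approximation inequalities with the same parameters. A downward induction on $N$ then terminates, the base case $N = 0$ being the inequality $|c_0 w_0|_\mu < H(w_0)^{-1} H_S(w_0)^{-1} H(w_0)^{-\sigma}$ which, combined with the product formula applied to $c_0 w_0 \in K^*$, forces $H(w_0)$ to be bounded and hence admits only finitely many $w_0 \in K^*$.

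The principal difficulty is the second step, the careful conversion of the two hypotheses of the lemma into the precise Subspace-Theorem input $H(w)^{-\delta}$. One must turn $h_S(w_i) + h_S(w_i^{-1}) < \varepsilon h(w_i)$ into a uniform lower bound on the contribution of non-$S$ places to the projective height of $w$, and simultaneously absorb the asymmetric factor $H(w_0) H_S(w_0)^{N+1}$ appearing in the bound on $|c_0 w_0 + \ldots + c_N w_N|_\mu$ into the heights that naturally surface on the Subspace side. The numerical threshold $(N+2)\varepsilon$ encodes exactly the break-even point of this calculation: for $\sigma$ strictly above it the error terms fit inside a positive residual $\delta$, while below it they swamp the right-hand side. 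By comparison, the inductive descent through the subspaces is routine, provided one verifies that the non-degeneracy condition on subsums involving $c_0 w_0$ propagates through each elimination step.
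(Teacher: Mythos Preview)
The paper does not supply its own proof of this lemma: it is quoted verbatim as a result of Corvaja and Zannier \cite{corvaja-zannier-2002}, with the sole remark that their argument uses Schmidt's Subspace Theorem. Your proposal therefore cannot be compared against a proof in the paper, but it does align with the method attributed to the cited source, and the choice of linear forms you describe (the full sum at the distinguished place $\mu$, coordinate forms elsewhere) is indeed the standard and correct setup for this type of statement.

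Two points where your sketch would need tightening if carried out in full. First, the coefficients $c_0,\ldots,c_N$ lie in $\overline{K}^*$ rather than $K^*$, so one must pass to a finite extension before the linear forms have coefficients in the ground field; this is routine but should be said. Second, and more substantively, your inductive descent through the subspaces is not as clean as you suggest: after eliminating a variable via a subspace relation $\sum a_i w_i = 0$, the new linear combination has modified coefficients, and a vanishing subsum of the \emph{new} form involving the $w_0$ term does not in general correspond to a vanishing subsum of the \emph{original} form. So the non-degeneracy hypothesis does not propagate automatically, and the reduction to a strictly smaller instance of the same problem requires a more careful argument than a direct substitution. Corvaja and Zannier's actual treatment organises the induction somewhat differently to sidestep this issue; you would need to consult their paper to see the precise mechanism.
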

	
	\begin{proof}[Proof of Theorem~\ref{thm:solpoly}]
		Let $(n,m,Z) \in \NN^2 \times \D$ be a solution of equation \eqref{eq:mtrxpolyeq}.
		By the definition of $\D$, the matrices $A_1,A_2,Z$ commute with each other and thus are simultaneously diagonalizable (see Lemma~\ref{lem:simul-diag}), i.e.\ there is a non-singular matrix $T$ such that 
		\begin{equation*}
			A_1 = T
			\begin{pmatrix}
				\gamma_1 & 0 & \cdots & 0 \\
				0 & \gamma_2 & \cdots & 0 \\
				\vdots & \vdots & \ddots & \vdots \\
				0 & 0 & \cdots & \gamma_l 
			\end{pmatrix}
			T^{-1}, \qquad A_2 = T
			\begin{pmatrix}
				\beta_1 & 0 & \cdots & 0 \\
				0 & \beta_2 & \cdots & 0 \\
				\vdots & \vdots & \ddots & \vdots \\
				0 & 0 & \cdots & \beta_l 
			\end{pmatrix}
			T^{-1}
		\end{equation*}
		and
		\begin{equation*}
			Z = T
			\begin{pmatrix}
				z_1(n, m) & 0 & \cdots & 0 \\
				0 & z_2(n, m) & \cdots & 0 \\
				\vdots & \vdots & \ddots & \vdots \\
				0 & 0 & \cdots & z_l(n, m) 
			\end{pmatrix}
			T^{-1}
		\end{equation*}
		where $z_i(n,m)$ are the eigenvalues of $Z$.
		Hence equation \eqref{eq:mtrxpolyeq} becomes 
		\begin{equation}
			\label{eq:linesys}
			\left\{
			\begin{aligned}
				z_1(n,m)^d + a_1(\gamma_1^n,\beta_1^m) z_1(n,m)^{d-1} + \ldots + a_d(\gamma_1^n,\beta_1^m) &= 0 \\
				z_2(n,m)^d + a_1(\gamma_2^n,\beta_2^m) z_2(n,m)^{d-1} + \ldots + a_d(\gamma_2^n,\beta_2^m) &= 0 \\
				&\vdots \\
				z_l(n,m)^d + a_1(\gamma_l^n,\beta_l^m) z_l(n,m)^{d-1} + \ldots + a_d(\gamma_l^n,\beta_l^m) &= 0
			\end{aligned}
			\right..
		\end{equation}
		We consider the equation 
		\begin{equation}
			\label{eq:singleeq}
			f(\gamma_1^n,\beta_1^m,z) \coloneqq z_1(n,m)^d + a_1(\gamma_1^n,\beta_1^m) z_1(n,m)^{d-1} + \ldots + a_d(\gamma_1^n,\beta_1^m) = 0.
		\end{equation}
		The remaining equations in the system \eqref{eq:linesys} will be treated analogously.
		For simplifying the notation, we put
		\begin{equation*}
			a_i(n,m) \coloneqq a_i(\gamma_1^n,\beta_1^m) \qquad \text{and} \qquad z_{n,m} \coloneqq z_1(n,m).
		\end{equation*}
		It is not difficult to see that
		\begin{equation*}
			\abs{z_{n,m}} \leq \max\setb{1,\abs{a_1(n,m)} + \ldots + \abs{a_d(n,m)}}.
		\end{equation*}
		Using $\abs{\beta_1},\abs{\gamma_1} <1$, we find an explicit upper bound for $\abs{z_{n,m}}$.
		Hence it follows that $f(0,0,z_{n,m})$ tends to $0$ when $(n,m) \rightarrow \infty$, and we deduce from Lemma~\ref{lem:stewart} that
		\begin{equation*}
			z_{n,m} \in \bigcup_{k=1}^{d} S_k
		\end{equation*}
		for $n, m$ large enough and $S_k \coloneqq \set{z \in \CC : \abs{z-z^{(k)}} < 1/2}$ where $z^{(k)}$ are the roots of $f(0,0,z)$.
		Without loss of generality we assume $z_{n,m} \in S_1$.
		Since $f(\gamma_1^n,\beta_1^m,z_{n,m})=0$ and
		\begin{equation*}
			\frac{\partial f}{\partial z} (0,0,z^{(1)}) \neq 0
		\end{equation*}
		by assumption, it follows from the implicit function theorem (see Theorem~8 in \cite{fuchs-heintze-2021}) that there is a series $ P $ with algebraic coefficients such that  
		\begin{equation*}
			z_{n,m} = P(\gamma_1^n,\beta_1^m) \coloneqq z^{(1)} + \sum_{\abs{(i_1, i_2)} > 0} A_{(i_1,i_2)} \gamma_1^{n i_1} \beta_1^{m i_2}
		\end{equation*}
		for $n, m$ large enough.
		Next we want to show  that $z_{n, m}$ is a multilinear recurrence sequence.
		It follows by Lemma~9 in \cite{fuchs-heintze-2021} that 
		\begin{equation}
			\label{eq:rootheight}
			h(z_{n,m}) \leq c_{1} + c_{2} h(1 : \gamma_1^n : \beta_1^m)
		\end{equation} 
		for some effectively computable constants $c_1,c_2$.
		Let us consider monomials $\mathbf{X}^{\textbf{i}}$ such that $\abs{\textbf{i}} = i_1 + i_{2} \leq B$ and $A_{\textbf{i}} \neq 0$, where $B$ is a fixed positive integer.
		Later we will choose $B$ large enough to fit its purpose.
		Denote by $N$ the number of such monomials, numbered lexicographically.
		Clearly, we have $N \leq \binom{B+2}{B}$.
		In addition, for our goal we may assume $ N \geq 3 $.
		Put $w= (w_0,\ldots,w_N)$ with $w_0 = z_{n,m} - z^{(1)}$ and $w_{\textbf{i}} = \gamma_1^{n i_1} \beta_1^{m i_{2}} $ for $ 0 < i_1 + i_2 \leq B$.
		Now we want to show that all conditions in Lemma~\ref{lem:corzan} are satisfied in order to apply it.
		To do so, we first estimate the values $H(w_0),\widehat{H}(w),H_S(w_0)$ in terms of $\max\setb{\abs{\gamma_1},\abs{\beta_1}}$.
		By relation \eqref{eq:rootheight} combined with properties of the height, we have
		\begin{equation*}
			H(w_0) \leq e^{c_1} H(1: \gamma_1^n : \beta_1^m )^{c_2} \cdot 2 H(z^{(1)}) \leq e^{c_3} H(1: \gamma_1^n : \beta_1^m )^{c_2}
		\end{equation*}
		for a constant $ c_3 > 1 $.
		Combining this inequality with the fact that $H(\gamma_1^n) = H(\gamma_1)^n$ and  $H(\beta_1^m) = H(\beta_1)^m$, we obtain
		\begin{equation*}
			H(w_0) \leq e^{c_3} \max\setb{\abs{\gamma_1},\abs{\beta_1}}^{-(n+m) \frac{c_2\log Y}{-\log  \max\setb{\abs{\gamma_1},\abs{\beta_1}}}},
		\end{equation*}
		where $Y \coloneqq \max\setb{H(\gamma_1),H(\beta_1)}$.
		Hence, for $n,m$ large enough, we finally get
		\begin{equation*}
			H(w_0) \leq \max\setb{\abs{\gamma_1},\abs{\beta_1}}^{-\max(n,m)T}
		\end{equation*}
		with
		\begin{equation*}
			T \coloneqq 2c_3 + \frac{2c_2\log Y}{-\log \max\setb{\abs{\gamma_1},\abs{\beta_1}}} \geq 2.
		\end{equation*}
		From this estimate one can deduce
		\begin{align*}
			\widehat{H}(w) &= H(w_0) \cdot H(w_1) \cdot \ldots \cdot H(w_N) \\
			&\leq \max\setb{\abs{\gamma_1},\abs{\beta_1}}^{-\max(n,m)T} \cdot \max\setb{H(\gamma_1),H(\beta_1)}^{\max(n,m) NB} \\
			&\leq \max\setb{\abs{\gamma_1},\abs{\beta_1}}^{-\max(n, m) \left( T + NB \frac{\log \max\setb{H(\gamma_1),H(\beta_1)}}{-\log \max\setb{\abs{\gamma_1},\abs{\beta_1}}}\right)} \\
			&\leq \max\setb{\abs{\gamma_1},\abs{\beta_1}}^{-\max(n,m)NBL}
		\end{align*}
		where for the last inequality we have set
		\begin{equation*}
			L \coloneqq \frac{T \log \max\setb{H(\gamma_1),H(\beta_1)}}{-\log \max\setb{\abs{\gamma_1},\abs{\beta_1}}}
		\end{equation*}
		and supposed
		\begin{equation*}
			B > \max\setb{3,\frac{2 \log \max\setb{\abs{\gamma_1},\abs{\beta_1}}}{-\log \max\setb{H(\gamma_1),H(\beta_1)}}} \eqqcolon Q.
		\end{equation*}
		Since $z_{n,m},z^{(1)} \in \Oc_S$, it follows that $H_S(w_0) =1$.
		Now we fix $\varepsilon = 1/BN^2$ as well as $\sigma = (N+3)\varepsilon $ and remind that $N \geq 3$.
		Then we have 
		\begin{align*}
			H(w_0)^{-1}&H_S(w_0)^{-(N+1)}\widehat{H}(w)^{-\sigma} \geq \\
			&\geq \max\setb{\abs{\gamma_1},\abs{\beta_1}}^{\max\setb{n,m}T} \cdot \max\setb{\abs{\gamma_1},\abs{\beta_1}}^{\max\setb{n,m}\sigma NBL} \\
			&\geq \max\setb{\abs{\gamma_1},\abs{\beta_1}}^{\max\setb{n,m} \left( T+\frac{L(N+3)}{N} \right)} \\
			&\geq \max\setb{\abs{\gamma_1},\abs{\beta_1}}^{\max\setb{n,m} (T+ 2L)}.
		\end{align*}
		In the next step we want to estimate the value $\abs{-w_0 + A_1w_1 + \ldots + A_N w_N}$.
		Since the series $ P $ converges within a neighborhood of the origin, its general term converges to zero.
		This implies $\abs{A_{\textbf{i}}} \leq \max\setb{1,\max\abs{z^{*}}} \rho^{\abs{\textbf{i}}}$ for some positive value $\rho$ where $ \max\abs{z^*}$ is taken over all zeros of the polynomial $f(0,0,z)$.
		We know that $ \max\setb{\abs{\gamma_1},\abs{\beta_1}}^{\min\setb{n,m}} $ tends to zero when $(n,m) \rightarrow \infty$, which for $n,m$ large enough implies $2 \rho \sqrt{\max\setb{\abs{\gamma_1},\abs{\beta_1}}^{\min\setb{n,m}}} < 1$.
		Hence
		\begin{align*}
			\abs{-w_0 + A_1w_1 + \ldots + A_N w_N} &\leq \\
			&\hspace{-2cm} \leq \max\setb{1,\max\abs{z^{*}}} \sum_{\abs{\textbf{i}} \geq B} \rho^{\abs{\textbf{i}}} \max\setb{\abs{\gamma_1},\abs{\beta_1}}^{\min\setb{n,m}\abs{\textbf{i}}} \\
			&\hspace{-2cm} \leq \max\setb{1,\max\abs{z^{*}}} \sum_{\textbf{i}} 2^{-\abs{\textbf{i}}} \max\setb{\abs{\gamma_1},\abs{\beta_1}}^{\min\setb{n,m}B/2},
		\end{align*}
		where for the last inequality we used the fact $2 \rho \sqrt{\max\setb{\abs{\gamma_1},\abs{\beta_1}}^{\min\setb{n,m}}} < 1$.
		Therefore the inequality
		\begin{equation*}
			\abs{-w_0 + a_1w_1 + \ldots + a_N w_N} < H(w_0)^{-1} H_S(w_0)^{-(N+1)} \widehat{H}(w)^{-\sigma}
		\end{equation*}
		is satisfied if
		\begin{multline}
			\label{eq:satif}
			\max\setb{1,\max\abs{z^*}} \sum_{\textbf{i}} 2^{-\abs{\textbf{i}}} \max\setb{\abs{\gamma_1},\abs{\beta_1}}^{\min\setb{n,m} B/2} \\
			< \max\setb{\abs{\gamma_1},\abs{\beta_1}}^{\max\setb{n,m} (T+2L)}.
		\end{multline} 
		Since $ \max\setb{n,m} \leq c \min\setb{n,m}$, we deduce that inequality \eqref{eq:satif} holds if
		\begin{multline*}
			\max\setb{1,\max\abs{z^*}} \sum_{\textbf{i}} 2^{-\abs{\textbf{i}}} \max\setb{\abs{\gamma_1},\abs{\beta_1}}^{\max\setb{n,m} B/(2c)} \\
			< \max\setb{\abs{\gamma_1},\abs{\beta_1}}^{\max\setb{n,m} (T+ 2L)}.
		\end{multline*}
		Thus, by setting
		\begin{equation*}
			V \coloneqq \max \left( Q, 2c(T+2L)+ \frac{2c \log \left( \max\setb{1,\max\abs{z^*}} \sum_{\textbf{i}} 2^{-\abs{\textbf{i}}} \right)}{-\log\max\setb{\abs{\gamma_1},\abs{\beta_1}}} \right),
		\end{equation*}
		we can take $ B = \lfloor V \rfloor +1$.
		
		We now want to apply Lemma~\ref{lem:corzan}.
		On the one hand side we have $ h(\beta_1^{a_1}\gamma_1^{a_2}) \neq 0 $ for any nonzero integer vector $(a_1,a_2)$ since $\Gamma_{1,1} \cap \Gamma_{2,1} = \set{1}$.
		Therefore $h(w_i) \neq 0$ for all $i \in \set{1,\ldots,N}$.
		On the other hand side $h_S(w_i)=0$ and $h_S(w_i^{-1})=0$ because $w_i \in \Oc_S^*$ for each $i=1,\ldots,N$.
		So the inequality in the first item of Lemma~\ref{lem:corzan} is satisfied for all $i=1,\ldots,N$. 
		By applying Lemma~\ref{lem:corzan}, we conclude that for $n$ and $m$ large enough a subsum of the $A_{\textbf{i}}w_{\textbf{i}}$ involving $-w_0$ vanishes.
		It is clear that there are not more than $\Delta = 2^N$ such subsums.
		From this we get at most $d \Delta$ multi-recurrences of the shape
		\begin{equation}
			\label{eq:solmultrec}
			\sum_{\abs{(i_1,i_2)} \leq B} A_{(i_1,i_2)}^{j} \gamma_1^{n i_1} \beta_1^{m i_2},
		\end{equation} 
		where $ j $ runs through the set $ \set{1,\ldots,d\Delta} $ and the $A_{(i_1,i_2)}^{j}$ are algebraic numbers, such that $z_{n,m}$ equals one of them.
		By inserting a representation of $z_{n,m}$ given by relation \eqref{eq:solmultrec} into equation \eqref{eq:singleeq}, we finally obtain an exponential Diophantine equation of the shape 
		\begin{equation*}
			\sum_{\abs{(i_1,i_2)} \leq C_3} F_{(i_1,i_2)}^{j} \gamma_1^{n i_1} \beta_1^{m i_2} = 0,
		\end{equation*} 
		where the $F_{(i_1,i_2)}^{j}$ are algebraic numbers and $C_3 \coloneqq C+dB$ with $C$ the maximum of the total degrees among the polynomials $a_i$.
		This completes the proof of our theorem.
	\end{proof}

	\section{Special counting results for multiplicatively dependent tuples}
	\label{sec:5counting}
	
	In this final section we give some bounds on the number of multiplicatively dependent tuples of symmetric matrices.
	Note that symmetric matrices do not form a semigroup and therefore results using this property cannot be applied.
	
	\subsection{Multiplicatively dependent pairs}
	
	We denote by $ \D_n(\ZZ;H) $ the set of diagonalizable $ (n \times n) $-matrices $ A $ with integer entries, $ \Hc(A) \ll H $ and $ \det A \neq 0 $.
	Moreover, by $ \D_{n,2}^*(\ZZ;H) $ we denote the set of multiplicatively dependent pairs of matrices from $ \D_n(\ZZ;H) $ such that none of the two components is a root of the identity matrix.
	If $ A $ is an $ (n \times n) $-matrix with integer entries such that the characteristic polynomial has a multiple zero (in the algebraic closure), then the discriminant of this polynomial is zero.
	Thus there are at most $ \ll H^{n^2-1} $ possibilities for choosing the entries of $ A $ in this situation.
	On the other hand, if the characteristic polynomial has no multiple zero, then the matrix is diagonalizable.
	Katznelson \cite{katznelson-1993} proved that the number of singular matrices with bounded entries is $ \ll H^{n^2-n} \log H $.
	We emphasize at this point that all constants implied by $ \ll $ may only depend on the dimension $ n $.
	Overall we get $ \# \D_n(\ZZ;H) \gg H^{n^2} $.
	
	Furthermore we denote by $ \Sc_n(\ZZ;H) $ the subset of $ \D_n(\ZZ;H) $ consisting of symmetric matrices and by $ \Sc_{n,2}^*(\ZZ;H) $ the subset of $ \D_{n,2}^*(\ZZ;H) $ where both components of the pair are symmetric matrices.
	Since Eskin and Katznelson \cite{eskin-katznelson-1995} proved that the number of singular symmetric matrices with bounded entries is $ \ll H^{(n^2-n)/2} \log H $, we get $ \# \Sc_n(\ZZ;H) \gg H^{(n^2+n)/2} $.
	
	\begin{lemma}
		\label{lem:offdiagonal}
		Let $ a_{ij} $ for $ i,j \in \set{1,\ldots,n} $ with $ i \neq j $ as well as $ \lambda_1,\ldots,\lambda_n $ be given elements from an algebraically closed field $ K $. Then there exists an $ (n \times n) $-matrix over this field with eigenvalues $ \lambda_1,\ldots,\lambda_n $ and whose $ (i,j) $-entry is equal to $ a_{ij} $ for $ i,j \in \set{1,\ldots,n} $ with $ i \neq j $. The number of such matrices is bounded above by $ n^{n^2} $.
	\end{lemma}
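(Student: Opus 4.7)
The plan is to fix the off-diagonal entries, regard the diagonal entries as unknowns $y_1,\ldots,y_n$, and reinterpret the prescribed-spectrum condition as a polynomial system in $K^n$ to which a Bezout-type argument applies.

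First I would make the structural observation that the characteristic polynomial of the matrix $A(y)$ has a specific shape in $y$. Expanding $\det(XI-A(y))$ as a signed sum of principal $k\times k$ minors and applying the Leibniz formula inside each minor, only the identity permutation contributes a pure diagonal product $\prod_{i\in I} y_i$, while every other permutation has at least two non-fixed points and therefore involves at most $k-2$ diagonal factors. Consequently the coefficient of $X^{n-k}$ in $\det(XI-A(y))$ equals $(-1)^k\bigl(e_k(y)+R_k(y)\bigr)$, where $e_k$ is the $k$-th elementary symmetric polynomial in $y_1,\ldots,y_n$ and $R_k$ is a polynomial in $y$ of total degree at most $k-2$ (with the convention $R_1\equiv 0$). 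Hence the prescribed-spectrum condition becomes the polynomial system
\[
F_k(y) := e_k(y) + R_k(y) - e_k(\lambda_1,\ldots,\lambda_n) = 0, \qquad k=1,\ldots,n,
\]
in which $F_k$ has total degree exactly $k$ and leading homogeneous form precisely $e_k(y)$.

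The concluding step is an application of projective Bezout. Homogenizing with an extra variable $y_0$ produces $\tilde F_k$ of degree $k$ in $\PP^n_K$, and the restriction of $\tilde F_k$ to the hyperplane at infinity $\{y_0=0\}$ is exactly $e_k(y_1,\ldots,y_n)$. Any projective solution lying on $\{y_0=0\}$ would therefore satisfy $e_1(y)=\cdots=e_n(y)=0$, which forces $y_1,\ldots,y_n$ to be the roots of $X^n$ and hence all zero --- which is not a point of $\PP^n$. Thus the common projective zero-locus $V$ of $\tilde F_1,\ldots,\tilde F_n$ lies entirely in the affine chart $\{y_0\neq 0\}$. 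Since any positive-dimensional closed subvariety of $\PP^n$ must meet every hyperplane, $V$ is forced to be zero-dimensional, and Bezout yields $\#V = 1\cdot 2\cdots n = n!$ points counted with multiplicity. This simultaneously proves the existence of the completion and bounds the number of completions by $n!\leq n^n\leq n^{n^2}$. The main obstacle is the structural step identifying $e_k$ as the leading form of the $k$-th characteristic coefficient with a strictly lower-degree remainder; once that is in place, the passage to projective space and the resulting Bezout count are routine.
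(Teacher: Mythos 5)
Your argument is correct and self-contained, whereas the paper's proof is merely a citation: it invokes Theorem~4 of Friedland (1972) for existence and finiteness, and then asserts without details that an inspection of Friedland's proof yields the bound $n^{n^2}$. Your route replaces the citation by an explicit projective Bezout count. The key structural observation — that the coefficient of $X^{n-k}$ in $\det(XI - A(y))$ equals $(-1)^k\bigl(e_k(y) + R_k(y)\bigr)$ with $\deg R_k \le k-2$, because a non-identity permutation of a $k$-element index set fixes at most $k-2$ indices — is exactly what makes the leading forms of the system $F_1,\ldots,F_n$ equal to $e_1,\ldots,e_n$. Since the common zero locus of $e_1,\ldots,e_n$ in $\PP^{n-1}$ is empty (they are the coefficients of $\prod(T-y_i)$, so their simultaneous vanishing forces all $y_i=0$), the homogenized system has no solutions at infinity, the projective intersection is zero-dimensional and non-empty, and Bezout gives at most $1\cdot 2\cdots n = n!$ solutions with multiplicity. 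This buys two things over the paper's treatment: a proof that does not depend on the reader reconstructing a bound from a fifty-year-old reference, and a sharper bound $n!$ in place of $n^{n^2}$. (Both bounds are more than sufficient for the paper's application in Theorem~\ref{thm:boundsymm}, where only a constant depending on $n$ is needed.) The only thing worth stating explicitly, which you leave implicit, is that the intersection of $n$ hypersurfaces in $\PP^n$ is automatically non-empty by the projective dimension theorem, so the Bezout count $n!>0$ together with the absence of points at infinity gives existence of an affine solution, not merely finiteness.
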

	
	\begin{proof}
		The existence and finiteness statement is proven as Theorem~4 in \cite{friedland-1972}. A precise reading of the proofs given there yields a bound $ \leq n^{n^2} $.
	\end{proof}
	
	\begin{theorem}
		\label{thm:boundsymm}
		With the above notation, we have
		\begin{equation*}
			H^{(n^2+n)/2} \ll \# \Sc_{n,2}^*(\ZZ;H) \ll H^{n^2} (\log H)^{2+n}.
		\end{equation*}
	\end{theorem}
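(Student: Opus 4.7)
The plan is to establish the lower and upper bounds independently, using a trivial ``diagonal pair'' construction for the former and Theorem~\ref{thm:case2} together with Lemma~\ref{lem:offdiagonal} for the latter.

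\textbf{Lower bound.} For any $A \in \Sc_n(\ZZ;H)$ which is not a root of $I_n$, the pair $(A,A)$ belongs to $\Sc_{n,2}^*(\ZZ;H)$ via the trivial multiplicative relation $A^{1} A^{-1} = I_n$ (with both exponents nonzero). By Lemma~\ref{lem:aki}, a real symmetric integer matrix which is a root of $I_n$ has eigenvalues in $\set{-1,+1}$ (the only real roots of unity), hence satisfies $A^2 = I_n$; comparing diagonal entries of $A^2$ and $I_n$ forces each row of $A$ to have exactly one nonzero entry, equal to $\pm 1$, so $A$ is a signed symmetric permutation matrix and such matrices form a set of cardinality bounded purely in terms of $n$. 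This already yields $\# \Sc_{n,2}^*(\ZZ;H) \geq \# \Sc_n(\ZZ;H) - O_n(1) \gg H^{(n^2+n)/2}$.

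\textbf{Upper bound.} Let $(A,B) \in \Sc_{n,2}^*(\ZZ;H)$. Real symmetric matrices are automatically diagonalizable, and by definition of $\Sc_n(\ZZ;H)$ we have $A,B \in \mathrm{GL}_n(\QQ)$ with $\Hc(A),\Hc(B) \ll H$, so Theorem~\ref{thm:case2} applies: there exist nonzero integers $k_1,k_2$ with $\abs{k_1},\abs{k_2} \ll \log H$ such that $A^{k_1} B^{k_2} = I_n$. I would bound $\# \Sc_{n,2}^*(\ZZ;H)$ by layering choices. There are $\ll H^{n(n+1)/2}$ choices for the symmetric integer matrix $A$ with entries bounded by $O(H)$, and $\ll (\log H)^2$ choices for $(k_1,k_2)$. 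Once these are fixed, the relation $B^{k_2} = A^{-k_1}$ forces the multiset equality $\set{\nu_i^{k_2}} = \set{\lambda_i^{-k_1}}$ of eigenvalues (where $\lambda_i$, $\nu_i$ denote the eigenvalues of $A$, $B$), so each $\nu_i$ is a $k_2$-th root of some $\lambda_{\pi(i)}^{-k_1}$, leaving at most $n! \cdot \abs{k_2}^n \ll (\log H)^n$ possibilities for the eigenvalue tuple $(\nu_1,\ldots,\nu_n)$ of $B$. The $n(n-1)/2$ upper off-diagonal entries of $B$ each contribute $\ll H$ choices for a total of $\ll H^{n(n-1)/2}$, and these determine all off-diagonal entries by symmetry; finally Lemma~\ref{lem:offdiagonal} bounds the number of matrices with prescribed eigenvalues and off-diagonal entries by $n^{n^2} = O_n(1)$, pinning down the diagonal of $B$.

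Multiplying these factors yields
\begin{equation*}
\# \Sc_{n,2}^*(\ZZ;H) \ll H^{n(n+1)/2} \cdot (\log H)^2 \cdot (\log H)^n \cdot H^{n(n-1)/2} = H^{n^2} (\log H)^{n+2},
\end{equation*}
as required. The main obstacle is organizational rather than technical: one must verify that Theorem~\ref{thm:case2} legitimately applies (diagonalizability is automatic from symmetry and invertibility from $\det \neq 0$) and ensure that the $(\log H)^n$ factor arises specifically from the eigenvalue count via the small-exponent conclusion of Theorem~\ref{thm:case2}, since replacing this by the naive bound ``$H$ choices per eigenvalue'' would add a power of $H$ and overshoot the target.
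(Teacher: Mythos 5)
Your proof is correct and follows essentially the same strategy as the paper: for the upper bound, fix $A$, invoke Theorem~\ref{thm:case2} to bound the exponents, count the eigenvalues of $B$ as $|k_2|$-th roots of permuted powers of the eigenvalues of $A$, then fix the off-diagonal entries and apply Lemma~\ref{lem:offdiagonal}; for the lower bound, take the pair $(A,A)$ with exponents $1,-1$. The one small divergence is in discarding roots of $I_n$: you observe directly that a symmetric integer matrix with eigenvalues $\pm 1$ satisfies $A^2 = I_n$ and hence is a signed symmetric permutation matrix (so only $O_n(1)$ exceptions), whereas the paper combines Lemma~\ref{lem:aki} with Lemma~\ref{lem:offdiagonal} to obtain the weaker but still sufficient bound $\ll H^{(n^2-n)/2}$.
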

	
	\begin{proof}
		We start with proving the upper bound.
		First we fix an arbitrary matrix $ A \in \Sc_n(\ZZ;H) $. For this we have $ \ll H^{(n^2+n)/2} $ possibilities.
		Once $ A $ is fixed, also the eigenvalues $ \lambda_1,\ldots,\lambda_n $ of $ A $ are fixed.
		It remains to count how many matrices $ B $ exist such that $ A^k B^l = I_n $, where we can assume that $ kl \neq 0 $.
		By Theorem~\ref{thm:case2} we may assume that $ \abs{k}, \abs{l} \ll \log H $.
		Denoting the eigenvalues of $ B $ by $ \mu_1,\ldots,\mu_n $, the numbers $ \mu_1^{-l},\ldots,\mu_n^{-l} $ must be a permutation of $ \lambda_1^k,\ldots,\lambda_n^k $.
		There are $ \ll \log H $ possibilities for $ k $ and $ l $, respectively, and again $ \ll \log H $ possible choices for each of the $ l $-th roots, whereas the number of permutations only depends on the dimension $ n $.
		Hence for each matrix $ A $ there are at most $ \ll (\log H)^{2+n} $ possibilities for the eigenvalues of $ B $.
		If we fix now the $ n^2-n $ off-diagonal entries of $ B $, which can be done in $ \ll H^{(n^2-n)/2} $ ways respecting symmetry, Lemma~\ref{lem:offdiagonal} states that then there are not more than $ n^{n^2} $ possibilities to complete the matrix $ B $.
		Thus, in summary, there are at most $ \ll H^{n^2} (\log H)^{2+n} $ pairs with the requested properties.
		
		Now we will prove the lower bound.
		As already mentioned above there are $ \gg H^{(n^2+n)/2} $ matrices in $ \Sc_n(\ZZ;H) $.
		From these we have to remove those matrices which satisfy $ A^k = I_n $ for some $ k > 0 $.
		In this exceptional case, by Lemma~\ref{lem:aki}, all eigenvalues of $ A $ are roots of unity of degree at most $ n $. Therefore their number is bounded by a constant depending only on $ n $.
		Fixing in addition the $ n^2-n $ off-diagonal entries of the symmetric matrix $ A $, Lemma~\ref{lem:offdiagonal} yields that there are $ \ll H^{(n^2-n)/2} $ matrices to remove.
		So there still remain $ \gg H^{(n^2+n)/2} $ choices for $ A $.
		Putting $ B = A $ as well as $ k=1 $ and $ l=-1 $ concludes the proof.
	\end{proof}
	
	\begin{remark}
		\label{rem:diagonal}
		With an analogous proof as for Theorem~\ref{thm:boundsymm} we get
		\begin{equation*}
			H^{n^2} \ll \# \D_{n,2}^*(\ZZ;H) \ll H^{2n^2-n} (\log H)^{2+n}.
		\end{equation*}
		This lower bound improves the lower bound from Theorem~2.5 in \cite{habegger-ostafe-shparlinski-} for the case $ s=2 $, whereas the upper bound is weaker than the corresponding bound in \cite{habegger-ostafe-shparlinski-}.
	\end{remark}
	
	\begin{remark}
		\label{rem:numberfields}
		A natural question is whether or not the results can be transfered to matrices defined over number fields instead of rational integers.
		Here one can replace $ H $ by $ N(H) $, which describes the number of elements in the considered number field having height not exceeding $ H $. Note that $ N(H) $ is always finite due to Northcott's theorem.
		Since the bound for the number of singular matrices due to Katznelson for our purposes could be replaced by the simple fact that $ \det A = 0 $ defines an algebraic variety, indeed we get corresponding bounds in the number field case.
	\end{remark}
	
	\subsection{Multiplicatively dependent $ s $-tuples of symmetric $ (2 \times 2) $-matrices}
	
	Similar to the notation above, we denote by $ \Sc_{2,s}^*(\ZZ;H) $ the set of multiplicatively dependent $ s $-tuples of matrices from $ \Sc_2(\ZZ;H) $ such that no subtuple, i.e.\ $ t $-tuple with $ t<s $ arising from cancelling components from an $ s $-tuple, is multiplicatively dependent.
	Moreover, $ \Sc_2(\ZZ;H;d) $ denotes the subset of $ \Sc_2(\ZZ;H) $ containing those matrices having determinant $ d $.
	
	\begin{lemma}
		\label{lem:divisor}
		Let $ \tau(m) $ denote the number of positive divisors of the positive integer $ m $. Then we have
		\begin{equation*}
			\tau(m) \ll m^{o(1)}.
		\end{equation*}
	\end{lemma}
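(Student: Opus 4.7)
The plan is to prove the classical divisor bound by exploiting multiplicativity of $\tau$ together with a split of the prime factorization into ``small'' and ``large'' primes relative to the exponent $\varepsilon$ we are trying to achieve.

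First I would recall that $\tau$ is multiplicative and that $\tau(p^k) = k+1$ for every prime power, so for $m = \prod_{p^k \| m} p^k$ one has
\begin{equation*}
\frac{\tau(m)}{m^{\varepsilon}} = \prod_{p^k \| m} \frac{k+1}{p^{k\varepsilon}}.
\end{equation*}
Fix an arbitrary $\varepsilon > 0$; it then suffices to show that this product is bounded by a constant $C_{\varepsilon}$ depending only on $\varepsilon$, since then $\tau(m) \leq C_{\varepsilon} m^{\varepsilon}$, which is exactly $\tau(m) \ll m^{o(1)}$ after letting $\varepsilon \to 0$.

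Next I would split the product according to the size of the prime $p$. For the ``large'' primes $p \geq 2^{1/\varepsilon}$ one has $p^{\varepsilon} \geq 2$, hence $(k+1)/p^{k\varepsilon} \leq (k+1)/2^{k} \leq 1$ for every $k \geq 1$, so these local factors contribute at most $1$ to the product. For the ``small'' primes $p < 2^{1/\varepsilon}$, there are only finitely many of them (depending on $\varepsilon$), and for each such $p$ the function $k \mapsto (k+1)/p^{k\varepsilon}$ tends to $0$ as $k \to \infty$ and is therefore bounded by some constant $M_{p,\varepsilon}$. Multiplying the finitely many bounds $M_{p,\varepsilon}$ over $p < 2^{1/\varepsilon}$ gives the constant $C_{\varepsilon}$.

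There is no real obstacle here; the only point that needs care is the uniformity in $k$ of the bound on $(k+1)/p^{k\varepsilon}$ for small primes, which one settles by an elementary calculus argument (the function has a unique maximum in $k$ at $k = 1/(\varepsilon \log p) - 1$, which produces an explicit bound depending only on $p$ and $\varepsilon$). This yields $\tau(m) \ll_{\varepsilon} m^{\varepsilon}$ for every $\varepsilon > 0$, which is the desired statement.
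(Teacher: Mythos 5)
Your proof is correct and is the standard elementary argument: write $\tau(m)/m^{\varepsilon} = \prod_{p^k \| m} (k+1)/p^{k\varepsilon}$, note that primes $p \geq 2^{1/\varepsilon}$ contribute local factors at most $1$ (since $(k+1)/2^k \leq 1$ for $k \geq 1$), and absorb the finitely many small primes into a constant $C_\varepsilon$, giving $\tau(m) \ll_\varepsilon m^\varepsilon$ for every $\varepsilon > 0$, which is equivalent to $\tau(m) = m^{o(1)}$.

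The paper, however, does not reprove this: it simply cites Theorem~317 of Hardy and Wright, and in doing so records the sharper quantitative form that the $o(1)$ exponent can be taken to be $1/\log\log m$, i.e.\ $\log\tau(m) \ll \log m/\log\log m$. Your approach buys self-containedness and elementarity, but only yields the qualitative $\limsup \log\tau(m)/\log m = 0$; extracting the $1/\log\log m$ rate from your split requires optimizing the threshold between small and large primes (taking $\varepsilon$ to depend on $m$, roughly $\varepsilon \asymp 1/\log\log m$, and summing $\log(k+1)$ over the small primes via the prime number theorem or Chebyshev bounds), which you do not carry out. Since the lemma as stated only asserts the qualitative bound, your argument suffices, but the paper's citation gives strictly more information.
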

	
	\begin{proof}
		This follows from Theorem~317 in \cite{hardy-wright-1979} where $ o(1) $ can be chosen as $ \frac{1}{\log \log m} $.
	\end{proof}
	
	\begin{lemma}
		\label{lem:givenDet}
		We have the bound
		\begin{equation*}
			\# \Sc_2(\ZZ;H;d) \ll H^{1+o(1)}.
		\end{equation*}
	\end{lemma}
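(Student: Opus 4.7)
The plan is to exploit the explicit parametrization of symmetric $(2\times 2)$-matrices together with the divisor bound from Lemma~\ref{lem:divisor}. Write a generic element of $\Sc_2(\ZZ;H;d)$ as
\begin{equation*}
M = \begin{pmatrix} a & b \\ b & c \end{pmatrix}, \qquad a,b,c \in \ZZ, \quad \max\set{|a|,|b|,|c|} \ll H,
\end{equation*}
subject to the single Diophantine constraint $\det M = ac - b^2 = d$.

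The strategy is to fix $b$ first and then count admissible pairs $(a,c)$. There are $\ll H$ choices for $b$ in the allowed range. Once $b$ is fixed, the equation $ac = d + b^2$ forces $(a,c)$ to be a signed divisor pair of the integer $N_b := d+b^2$. If $N_b \neq 0$, then the number of such pairs is bounded by $4\tau(|N_b|)$, and by Lemma~\ref{lem:divisor} combined with $|N_b| \ll H^2$, this gives $\tau(|N_b|) \ll |N_b|^{o(1)} \ll H^{o(1)}$. Summing over the $\ll H$ values of $b$ yields a contribution of $H \cdot H^{o(1)} = H^{1+o(1)}$.

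It remains to treat the exceptional case $N_b = 0$, i.e.\ $b^2 = -d$. This occurs for at most two integer values of $b$ (and only when $-d$ is a non-negative perfect square). For each such $b$, the equation $ac = 0$ forces $a=0$ or $c=0$, leaving the remaining entry free in a range of size $\ll H$; so this case contributes only $\ll H$ matrices, which is absorbed in $H^{1+o(1)}$. Combining the two contributions gives $\#\Sc_2(\ZZ;H;d) \ll H^{1+o(1)}$, as desired.

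There is no real obstacle here: the argument is a direct divisor-type estimate, and the only mild subtlety is noting that matrices with $b^2 = -d$ must be accounted for separately so that the divisor bound $\tau(|N_b|) \ll |N_b|^{o(1)}$ is applied only to nonzero integers. The tacit requirement that $M$ lie in $\D_n(\ZZ;H)$ (nonzero determinant, diagonalizable) is automatic here since $d \neq 0$ is fixed and every real symmetric matrix is diagonalizable.
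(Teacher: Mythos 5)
Your proof is correct and follows essentially the same route as the paper's: parametrize the symmetric matrix, fix the off-diagonal entry $b$ ($\ll H$ choices), use the divisor bound of Lemma~\ref{lem:divisor} on $ac = b^2+d$ when $b^2+d\neq 0$, and handle the degenerate case $b^2+d=0$ separately. The only cosmetic differences are that you make the factor from signed divisor pairs explicit and add the (correct) remark that diagonalizability is automatic for real symmetric matrices.
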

	
	\begin{proof}
		Let
		\begin{equation*}
			\begin{pmatrix}
				a & b \\ b & c
			\end{pmatrix}
			\in \Sc_2(\ZZ;H;d).
		\end{equation*}
		Then we have $ ac = b^2 + d $.
		First, there are $ \ll H $ possibilities for fixing the entry $ b $.
		As soon as $ b $ is fixed, $ a $ must divide $ b^2 + d $.
		Therefore there are $ \ll \tau(\abs{b^2+d}) $ choices possible.
		Using Lemma~\ref{lem:divisor}, for $ a $ we are restricted to $ \ll H^{o(1)} $ possibilities.
		Finally, $ c $ is now also fixed by $ ac = b^2 + d $ and we are done.
		Note that for the last argument we used that $ b^2 + d \neq 0 $.
		But $ b^2 + d = 0 $ is only possible for at most two values of $ b $ and furthermore at least one of $ a $ and $ c $ also has two be $ 0 $ in that case.
		Thus, for both of those values for $ b $, one of $ a $ and $ c $ is zero and for the other entry there are $ \ll H $ possiblities; this does not exceed the previous bound.
	\end{proof}
	
	\begin{lemma}
		\label{lem:numthlemma}
		Let $ Q > 0 $ be an integer and $ U \geq 1 $ a real number. Denoting by $ F(Q,U) $ the number of positive integers $ u \leq U $ whose prime divisors also divide $ Q $, we have
		\begin{equation*}
			F(Q,U) = (QU)^{o(1)}.
		\end{equation*}
	\end{lemma}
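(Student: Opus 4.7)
The plan is to bound $F(Q,U)$ via a simplex lattice-point count, and then apply the classical bound on $\omega(Q)$, the number of distinct prime divisors of $Q$.

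Write $Q = p_1^{e_1} \cdots p_k^{e_k}$ with distinct primes $p_1 < \cdots < p_k$, so $k = \omega(Q)$. Every $u$ counted by $F(Q,U)$ has the form $u = p_1^{a_1} \cdots p_k^{a_k}$ with $a_i \in \ZZ_{\geq 0}$, and $u \leq U$ combined with $p_i \geq 2$ forces $a_1 + \cdots + a_k \leq L := \lfloor \log U / \log 2\rfloor$. Counting such tuples gives
\[
F(Q,U) \leq \binom{L+k}{k}.
\]
I would then invoke the classical bound $\omega(Q) \leq (1+o(1)) \log Q / \log\log Q$ as $Q \to \infty$ (derivable from Chebyshev-type lower bounds on primorials, or from Lemma~\ref{lem:divisor} together with $\tau(Q) \geq 2^{\omega(Q)}$ in suitably refined form). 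In particular $k = o(\log Q)$.

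To finish, I need to verify $\log \binom{L+k}{k} = o(\log(QU))$ uniformly as $QU \to \infty$. Using the standard estimate $\binom{n}{r} \leq (en/r)^r$ with $(n,r) = (L+k, \min(L,k))$ one obtains
\[
\log \binom{L+k}{k} \leq \min(L,k)\, \log\bigl( e(L+k)/\min(L,k) \bigr) + O(\min(L,k)).
\]
A short case distinction on $L$ versus $k$, and on $\log U$ versus $\log Q$, combined with $k \leq (1+o(1)) \log Q/\log\log Q$, shows the right-hand side is $o(\log Q + \log U)$ in every regime; this yields $F(Q,U) = (QU)^{o(1)}$.

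The main obstacle is the transitional regime in which $L$ and $k$ are of comparable magnitude, since the naive entropy bound $\binom{L+k}{k} \leq 2^{L+k}$ only yields the useless estimate $F(Q,U) \leq QU$. This is overcome by the observation that comparability $L \leq k$ forces $L \leq k = o(\log Q)$, so $\log\binom{L+k}{k} = O(L+k) = o(\log Q) = o(\log(QU))$ directly. The opposite regime $L \gg k$ is handled by estimating $\log(L/k) = \log\log U - \log\log Q + O(\log\log\log Q)$ when $k$ is near its upper bound, so that $k \log(L/k)$ is controlled by $(\log Q/\log\log Q)\bigl(\log\log U - \log\log Q + O(\log\log\log Q)\bigr)$, which together with $\log(QU) \geq \max(\log Q, \log U)$ is easily checked to be $o(\log(QU))$ in each sub-case.
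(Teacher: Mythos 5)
The paper does not actually prove this lemma; it simply cites it as Lemma~3.1 of Habegger, Ostafe and Shparlinski \cite{habegger-ostafe-shparlinski-}. Your proposal is therefore a genuinely different route: you supply a self-contained elementary proof where the paper merely refers to an external source. The core idea, bounding $F(Q,U)$ by the simplex count $\binom{L+k}{k}$ with $L = \lfloor \log U/\log 2\rfloor$ and $k = \omega(Q)$, then feeding in the classical bound $\omega(Q) \le (1+o(1))\log Q/\log\log Q$, is sound, and the resulting estimate $\log\binom{L+k}{k} = o(\log(QU))$ does hold uniformly. I checked the case analysis: when $L \le k$ the trivial $\binom{L+k}{k}\le 2^{L+k}$ together with $k = o(\log Q)$ suffices; when $L > k$ one has $k\log(e(L+k)/k) = o(\log Q + \log U)$ after splitting on whether $\log U \le \log Q$ (giving $\log(eL/k) = O(\log\log\log Q)$) or $\log U > \log Q$ (where, writing $t = \log U/\log Q > 1$, the extra contribution $(\log Q/\log\log Q)\log t$ is dominated by $t\log Q \le \log(QU)$ since $\log t/t$ is bounded and $\log\log Q \to \infty$, and if $Q$ stays bounded then $k$ is bounded so $\log\binom{L+k}{k} = O(\log\log U)$ trivially). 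The bound $\binom{n}{r}\le (en/r)^r$ applied with $r = \min(L,k)$ is legitimate because $\binom{L+k}{k} = \binom{L+k}{L}$.

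Two expository remarks. First, your reduction to ``$k$ near its upper bound'' in the regime $L \gg k$ should be stated explicitly as the observation that $\binom{L+k}{k}$ is increasing in $k$, so the extremal case is $k$ as large as allowed; as written the reader may wonder about intermediate $k$. Second, the phrase ``easily checked to be $o(\log(QU))$ in each sub-case'' hides the only non-routine step, namely the split on $\log U \lessgtr \log Q$ and the comparison $\log t/t \to 0$; this deserves a line. Neither point is a gap, only a place where the argument is compressed. As an alternative you could instead use Rankin's trick, bounding $F(Q,U) \le U^\sigma \prod_{p\mid Q}(1-p^{-\sigma})^{-1}$ and optimizing $\sigma$; that avoids the binomial case analysis at the cost of importing a different standard device, and is likely closer to the proof in the cited source. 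Your combinatorial proof is cleaner from first principles.
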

	
	\begin{proof}
		This is Lemma~3.1 in \cite{habegger-ostafe-shparlinski-}.
	\end{proof}
	
	\begin{proposition}
		\label{prop:upperbound}
		With the above notation, we have
		\begin{equation*}
			\# \Sc_{2,s}^*(\ZZ;H) \ll H^{2s+o(1)}.
		\end{equation*}
		The implied constant depends on $ s $.
	\end{proposition}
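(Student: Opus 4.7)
The plan is to fix, for each $(A_1, \ldots, A_s) \in \Sc_{2,s}^*(\ZZ;H)$, a minimal relation $A_1^{k_1} \cdots A_s^{k_s} = I_2$; by minimality every $k_i$ is nonzero. Passing to determinants converts this matrix identity into the integer relation $d_1^{k_1} \cdots d_s^{k_s} = 1$ with $d_i = \det A_i \in \ZZ \setminus \set{0}$ and $\abs{d_i} \ll H^2$. I would then count in two stages: first the admissible determinant tuples, then the symmetric integer matrices realising each such tuple.

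For the first stage I would fix $d_2, \ldots, d_s$ arbitrarily in $[-2H^2, 2H^2]$ (contributing $\ll H^{2(s-1)}$ choices) and then control $d_1$ via the integer multiplicative relation. Since the prime support of $d_1$ must be contained in the primes dividing $d_2 \cdots d_s$ (whose absolute value is at most $H^{2(s-1)}$), Lemma~\ref{lem:numthlemma} bounds the number of admissible $d_1$ with $\abs{d_1} \leq 2H^2$ by $H^{o(1)}$, giving $\ll H^{2(s-1)+o(1)}$ determinant tuples in all.

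For the second stage Lemma~\ref{lem:givenDet} bounds by $\ll H^{1+o(1)}$ the number of symmetric matrices with any prescribed determinant and entries of absolute value at most $H$, hence $\ll H^{s+o(1)}$ matrix tuples per determinant tuple. Multiplying the two stages produces the baseline estimate $\ll H^{3s-2+o(1)}$, which exceeds the target $H^{2s+o(1)}$ by a factor of $H^{s-2}$.

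Closing this $H^{s-2}$ gap is the main obstacle. The expected source of savings is that the full matrix identity carries strictly more information than its determinant: it encodes four scalar equations in the entries of the $A_i$ rather than one. The plan is to show that, once $A_1$, $A_2$ and the entire determinant tuple have been fixed, the remaining matrix equations force each $A_i$ with $i \geq 3$ into a set of cardinality at most $H^{o(1)}$ — an $H$-factor improvement over Lemma~\ref{lem:givenDet} alone, saving a factor of $H$ per index $i \in \set{3,\ldots,s}$ and thereby producing exactly the missing $H^{s-2}$. Executing this refinement, while carefully handling the degenerate configurations (shared eigenvectors, coincident eigenvalues, vanishing off-diagonal entries) in which the generic constraints collapse, is where the bulk of the technical effort resides.
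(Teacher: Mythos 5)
Your proposal reproduces the paper's opening step (pass to determinants, restrict a determinant by prime support via Lemma~\ref{lem:numthlemma}, then count symmetric matrices of prescribed determinant via Lemma~\ref{lem:givenDet}) but you execute the counting in a way that loses too much: fixing $d_2,\ldots,d_s$ freely and only constraining $d_1$ gives $\ll H^{3s-2+o(1)}$, and the sketch you offer to recover the missing $H^{s-2}$ is speculative and is not what makes the argument work.

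The idea you are missing is the two-sided partition. After passing to determinants one does not get a single constrained determinant; rather the relation $\prod_{i\in\I}(\det A_i)^{|k_i|}=\prod_{j\in\J}(\det A_j)^{|k_j|}$ (with $\I$ and $\J$ the indices of positive and negative exponents, $\I\cup\J=\{1,\ldots,s\}$, all exponents nonzero by minimality) constrains \emph{every} $\det A_i$ with $i\in\I$ once the $\J$-side is fixed, since each such determinant has prime support in $Q=\prod_{j\in\J}|\det A_j|\ll H^{2J}$. The paper therefore fixes the \emph{matrices} $A_j$, $j\in\J$, outright ($\ll H^{3J}$ choices, three free entries each), after which each $A_i$, $i\in\I$, has $\ll H^{o(1)}$ admissible determinants (Lemma~\ref{lem:numthlemma}) and $\ll H^{1+o(1)}$ symmetric matrices per determinant (Lemma~\ref{lem:givenDet}), for $\ll H^{I+o(1)}$ in total. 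Choosing the labelling so that $J\leq I$ gives $3J+I=2J+s\leq 2s$, which is the claimed bound. Your version constrains only one determinant instead of all of $\I$, and so the $H^{s-2}$ deficit is structural; the hoped-for extra saving from ``the matrix equation carries more information'' is not used in the actual proof and you would need to establish it from scratch, including the degenerate cases you flag. In short, the gap is real: the correct mechanism is the $\I$/$\J$ split with the optimization $J\leq I$, not an extra refinement of Lemma~\ref{lem:givenDet}.
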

	
	\begin{proof}
		From $ A_1^{k_1} \cdots A_s^{k_s} = I_2 $ we can deduce by the multiplicativity of the determinant that
		\begin{equation}
			\label{eq:detproduct}
			\prod_{i \in \I} (\det A_i)^{\abs{k_i}} = \prod_{j \in \J} (\det A_j)^{\abs{k_j}}
		\end{equation}
		with $ \I \cap \J = \emptyset $ as well as $ \I \cup \J = \set{1,\ldots,s} $ and $ \abs{k_m} > 0 $ for all $ m \in \set{1,\ldots,s} $ since we assume that no subtuple is multiplicatively dependent.
		As the number of partitions of $ \set{1,\ldots,s} $ only depends on $ s $, we may consider $ \I $ and $ \J $ as fixed in what follows.
		Let us write $ I = \# \I $ and $ J = \# \J $.
		Without loss of generality we can assume $ J \leq I $.
		Now we will count tuples of matrices for which \eqref{eq:detproduct} is possible.
		First, we fix the $ J $ matrices $ A_j $ for $ j \in \J $ which can be done in $ \ll H^{3J} $ ways.
		Put
		\begin{equation*}
			Q = \prod_{j \in \J} \abs{\det A_j}.
		\end{equation*}
		Obviously, the prime factors of $ \det A_i $ for $ i \in \I $ are among those of $ Q $, by \eqref{eq:detproduct}, and $ \abs{\det A_i} \leq CH^2 $ for $ i \in \I $ and a constant $ C $, independent of $ H $.
		Therefore each $ \det A_i $ for $ i \in \I $ can take at most $ 2 F(Q,CH^2) $ values.
		With $ Q \ll H^{2J} $ we get from Lemma~\ref{lem:numthlemma} that $ 2 F(Q,CH^2) \ll H^{o(1)} $.
		Applying Lemma~\ref{lem:givenDet} to each of these values yields that each matrix $ A_i $ with $ i \in \I $ can take only $ \ll H^{1+o(1)} $ values.
		Hence for the $ I $ matrices $ A_i $ with $ i \in \I $ there are $ \ll H^{I+o(1)} $ possibilities.
		Putting things together, in summary there are $ \ll H^{3J+I+o(1)} $ options to choose the $ s $ matrices $ A_1,\ldots,A_s $.
		Since $ I+J=s $ and $ J \leq I $, we have $ 3J+I \leq 2s $, concluding the proof.
	\end{proof}
	
	\begin{proposition}
		\label{prop:lowerbound}
		With the above notation, we have
		\begin{equation*}
			\# \Sc_{2,s}^*(\ZZ;H) \gg
			\begin{cases}
				H^{s+o(1)}, & \text{ if } s \text{ is even,} \\
				H^{s-1}, & \text{ if } s \text{ is odd.}
			\end{cases}
		\end{equation*}
		The implied constant depends on $ s $.
	\end{proposition}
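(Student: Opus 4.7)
My plan is to produce, for each $s\ge 2$, an explicit family of at least the claimed number of minimally multiplicatively dependent $s$-tuples by working inside the commutative subalgebra of $\Sc_2(\ZZ)$ consisting of matrices
\[
A_{\alpha_1,\alpha_2}=\alpha_1 I_2+\alpha_2 M,\qquad M=\begin{pmatrix}0&1\\1&0\end{pmatrix}.
\]
Each such matrix is symmetric with integer entries and has eigenvalues $p=\alpha_1+\alpha_2$ and $q=\alpha_1-\alpha_2$; the map $A_{\alpha_1,\alpha_2}\leftrightarrow(p,q)$ is a multiplicative isomorphism onto the submonoid of $\ZZ^2$ of pairs with $p\equiv q\pmod 2$ (componentwise multiplication). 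Hence multiplicative dependence of matrices $A_1,\ldots,A_s$ in this family is equivalent to the simultaneous integer relations $\prod p_i^{k_i}=1$ and $\prod q_i^{k_i}=1$ with the same exponent vector $(k_i)$.

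For $s\ge 3$, set $r=\lfloor s/2\rfloor$ and use two "cyclically shifted" partitions of $\{1,\ldots,s\}$,
\[
\Pi_p=\{\{1,2\},\{3,4\},\ldots\},\qquad \Pi_q=\{\{2,3\},\{4,5\},\ldots,\{s,1\}\},
\]
with, in the odd case, the singleton $\{1\}$ inserted into $\Pi_p$ and $\{2\}$ into $\Pi_q$ (the remaining blocks shifted accordingly). I will assign parameters $\pi_1,\ldots,\pi_r,\chi_1,\ldots,\chi_r\in\{2,\ldots,H\}$ to the blocks of $\Pi_p$ respectively $\Pi_q$, with $p_1=1$ or $q_2=1$ at the singletons. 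A direct calculation then shows that the exponent vector $(k_i)$ obtained by alternating $\pm 1$ along the cycle (for even $s$) or path (for odd $s$) formed by $\Pi_p\cup\Pi_q$ satisfies $\prod A_i^{k_i}=I_2$ with every $k_i\ne 0$. This produces $\gg(H-1)^{2r}\sim H^{2r}$ candidate tuples, of which a positive constant fraction satisfies the parity condition $p_i\equiv q_i\pmod 2$. The case $s=2$ is handled separately by taking $A_1=A_2=A$ for any non-torsion $A\in\Sc_2(\ZZ;H)$; by Lemma~\ref{lem:aki} there are only $O(1)$ torsion matrices in $\Sc_2(\ZZ;H)$, giving $\gg H^3\gg H^{2+o(1)}$ such pairs at once.

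The crux will be to check that, for all but negligibly many parameter choices, no proper subtuple is multiplicatively dependent. Suppose $T\subsetneq\{1,\ldots,s\}$ and nonzero integers $(l_i)_{i\in T}$ yield $\prod_{i\in T}A_i^{l_i}=I_2$; the induced equations take the form $\prod_j\pi_j^{e_j}=1$ and $\prod_k\chi_k^{f_k}=1$. If both were trivial (all $e_j=f_k=0$) then $T$ would be simultaneously a union of complete $\Pi_p$-blocks \emph{and} of complete $\Pi_q$-blocks, which is impossible for proper $T$ because the cyclic-shift design makes $\Pi_p\cup\Pi_q$ a single cycle (respectively path) through every position. Hence at least one of the induced relations is a non-trivial multiplicative equation on integers in $[2,H]^r$, and the standard estimate that multiplicatively dependent integer $r$-tuples in $[1,H]^r$ number $\ll H^{r-1+o(1)}$ restricts the exceptional parameter set to size $\ll H^{2r-1+o(1)}$. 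This leaves $\gg H^{2r}$ genuinely minimally dependent tuples, which is $\gg H^{s+o(1)}$ for $s$ even and $\gg H^{s-1}$ for $s$ odd. The main obstacle in executing the plan will be carrying out this transversality/non-degeneracy argument uniformly across all proper subsets $T$; the cyclic-shift choice of partitions is precisely what makes it go through.
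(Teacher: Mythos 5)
Your proof is essentially correct, but it takes a genuinely different route from the paper's. The paper works inside the commutative subalgebra of \emph{diagonal} matrices $\X_2(\ZZ;K)$ (with $K=\sqrt{H}$), builds each $A_i$ as a product $B_j B_k$ of two auxiliary diagonal matrices assigned cyclically, and --- this is the key move --- enforces minimality \emph{deterministically} by restricting to auxiliary tuples in which each $\det B_i$ has a prime factor absent from $\prod_{j\ne i}\det B_j$; a short divisor-counting argument then controls the multiplicity of the map $(B_1,\ldots,B_s)\mapsto(A_1,\ldots,A_s)$, which is the source of the $o(1)$ in the even case. You instead work in the conjugate subalgebra $\alpha_1 I+\alpha_2 M$ (rotated diagonal), parametrize the two eigenvalue coordinates $p_i,q_i$ directly via two cycle-shifted partitions, and enforce minimality \emph{generically}, discarding the parameter tuples for which the $\pi$'s or $\chi$'s are multiplicatively dependent. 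Your route avoids the paper's multiplicity bookkeeping (your parametrization is injective, so the even case actually yields $\gg H^s$ with no $o(1)$ loss), but it introduces a different dependency: the estimate that multiplicatively dependent integer $r$-tuples in $[1,H]^r$ number $\ll H^{r-1+o(1)}$ is correct and known (Pappalardi, Sha, Shparlinski, Stewart), but it is not in the paper and would need to be cited or proved. There are two small points to tighten. First, the parity requirement $p_i\equiv q_i\pmod 2$ propagates around the cycle, forcing all $\pi_j,\chi_k$ (and the singleton value) to share one parity; restricting to all-odd parameters is the clean way to lose only a constant factor, and you should say so rather than appeal to a generic ``positive fraction.'' Second, your $s=2$ observation that torsion matrices in $\Sc_2(\ZZ;H)$ number $O(1)$ is true but deserves a sentence: a real symmetric torsion matrix has real eigenvalues that are roots of unity, hence $\pm1$, which pins down the matrix up to finitely many choices --- this is stronger than what Lemma~\ref{lem:aki} alone gives and uses symmetry essentially.
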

	
	\begin{proof}
		Denote by $ \X_2(\ZZ;K) $ the subset of $ \Sc_2(\ZZ;K) $ consisting of the diagonal matrices with non-zero diagonal elements.
		Obviously, it is $ \# \X_2(\ZZ;K) \gg K^2 $.
		Note that the product of two elements from $ \X_2(\ZZ;K) $ is again an element in $ \X_2(\ZZ;K) $ and that the multiplication in $ \X_2(\ZZ;K) $ is commutative.
		In total, there are $ \gg K^{2s} $ tuples in $ \X_2(\ZZ;K)^s $.
		Furthermore, we want to restrict ourselves to tuples $ (B_1,\ldots,B_s) $ where each determinant $ \det B_i $ has a prime factor not appearing in $ \prod_{j \neq i} \det B_j $.
		For this purpose, $ \det B_i $ has to avoid not more than
		\begin{equation*}
			F\left( \prod_{j \neq i} \abs{\det B_j}, C'K^2 \right) \ll K^{o(1)}
		\end{equation*}
		values, and since the diagonal entries have to divide the determinant, by Lemma~\ref{lem:divisor}, there exist $ \ll K^{o(1)} $ non-admissible choices for $ B_i $.
		Fixing $ B_j $ for $ j \neq i $ and then choosing $ B_i $ with this property, yields that we have to throw away $ \ll K^{2s-2+o(1)} $ tuples.
		Thus, discarding the unsuitable tuples, we see that there still remain $ \gg K^{2s} $ tuples $ (B_1,\ldots,B_s) \in \X_2(\ZZ;K)^s $ such that all $ s $ matrices are invertible and each determinant $ \det B_i $ has a prime factor not appearing in $ \prod_{j \neq i} \det B_j $.
		We denote this set of suitable $ s $-tuples by $ \T $.
		
		Let us first assume that $ s = 2r $ is even.
		We put $ K = \sqrt{H} $.
		Then for any tuple $ (B_1,\ldots,B_s) \in \T $ we set
		\begin{align*}
			A_{2i-1} &\coloneqq B_{2i-1} B_{2i} \\
			A_{2i} &\coloneqq B_{2i+1} B_{2i}
		\end{align*}
		for $ i \in \set{1,\ldots,r} $ with the definition $ B_{s+1} \coloneqq B_1 $.
		Since
		\begin{equation*}
			A_1 A_2^{-1} \cdot \ldots \cdot A_{2r-1} A_{2r}^{-1} = I_2,
		\end{equation*}
		the tuple $ (A_1,\ldots,A_s) $ is multiplicatively dependent.
		Note that each $ B_i $ appears in the definition of exactly two $ A_j $ and that they are assigned in a cyclic way.
		Thus no proper subtuple of $ (A_1,\ldots,A_s) $ can be multiplicatively dependent because each $ \det B_i $ contains an individual prime factor.
		Indeed, if $ A_j $ is skipped in a minimal multiplicatively dependent subtuple and $ A_{j+1} $ is still contained, then $ B_{j+1} $ appears exactly once, for which its exponent must be zero due to the individual prime factor of its determinant, a contradiction.
		
		Now we have to take into account that different tuples $ (B_1,\ldots,B_s) $ could lead to the same tuple $ (A_1,\ldots,A_s) $.
		Hence, the next step is to bound the number of $ (B_1,\ldots,B_s) $ leading to the same $ (A_1,\ldots,A_s) $.
		From the definition of $ A_1 $, we deduce that each diagonal entry of $ B_1 $ has to divide the corresponding diagonal entry of $ A_1 $.
		By Lemma~\ref{lem:divisor}, this means that there are $ \ll K^{o(1)} $ possibilities for $ B_1 $.
		In addition, if $ (A_1,\ldots,A_s) $ and $ B_1 $ are fixed, then $ B_2,\ldots,B_s $ are as well.
		Hence there lead $ \ll K^{o(1)} $ different $ (B_1,\ldots,B_s) \in \T $ to the same $ (A_1,\ldots,A_s) $.
		Overall, we get
		\begin{equation*}
			\# \Sc_{2,s}^*(\ZZ;H) \gg K^{2s+o(1)} = H^{s+o(1)}
		\end{equation*}
		for even $ s $.
		
		Finally, we assume that $ s = 2r+1 $ is odd.
		Again we put $ K = \sqrt{H} $.
		Similar as above, for any tuple $ (B_1,\ldots,B_{s-1}) \in \T' $, where $ \T' $ is generated from $ \T $ by throwing away the last component, we set
		\begin{align*}
			A_{2i-1} &\coloneqq B_{2i-2} B_{2i-1} \\
			A_{2i} &\coloneqq B_{2i} B_{2i-1}
		\end{align*}
		for $ i \in \set{1,\ldots,r} $ and $ A_{2r+1} = B_{2r} $ with the definition $ B_0 \coloneqq I_2 $.
		Since
		\begin{equation*}
			A_1 A_2^{-1} \cdot \ldots \cdot A_{2r-1} A_{2r}^{-1} A_{2r+1} = I_2,
		\end{equation*}
		the tuple $ (A_1,\ldots,A_s) $ is multiplicatively dependent.
		With the same arguments as above, we show that no subtuple is multiplicatively dependent and that different $ (B_1,\ldots,B_{s-1}) $ generate different $ (A_1,\ldots,A_s) $ in this case.
		Thus we get
		\begin{equation*}
			\# \Sc_{2,s}^*(\ZZ;H) \gg K^{2s-2} = H^{s-1}
		\end{equation*}
		for odd $ s $.
		This concludes the proof.
	\end{proof}
	
	\begin{remark}
		The construction of only diagonal matrices in the above proof looks more restrictive than it is.
		It is necessary that the product of two symmetric matrices $ B_i $ is again symmetric.
		To ensure this, for each element above the diagonal we get a polynomial equation in the matrix entries.
		Generically, each such equation will decrease the degrees of freedom by one.
		Thus there remain as many degrees of freedom as diagonal entries exist.
		For this reason we may focus our construction on diagonal matrices in the lower bound.
	\end{remark}
	
	\begin{remark}
		\label{rem:islike}
		The proofs of Proposition~\ref{prop:upperbound} and Proposition~\ref{prop:lowerbound} followed some ideas of \cite{habegger-ostafe-shparlinski-}.
	\end{remark}

	\section*{Acknowledgement}
	
	This research was funded in whole or in part by the Austrian Science Fund (FWF) I6750-N and P34763-N.
	Moreover, the authors are grateful to Alina Ostafe for helpful discussions around the topic of Section~\ref{sec:5counting}.

\end{document}